\theoremstyle{definition}
\numberwithin{equation}{subsection} 
\newtheorem{guess}{theorem}[subsection]
\newtheorem{thm}[guess]{Theorem}
\newtheorem{lem}[guess]{Lemma}
\newtheorem{prop}[guess]{Proposition}
\newtheorem{Cor}[guess]{Corollary}
\newcommand{\gfr}{\mathfrak{g}}
\newcommand{\cY}{\mathcal{Y}}
\newcommand{\cZ}{\mathcal{Z}}
\newcommand{\cI}{\mathcal{I}}
\newcommand{\cQ}{\mathcal{Q}}
\newcommand{\cO}{\mathcal{O}}
\newcommand{\cC}{\mathcal{C}}
\newcommand{\cE}{\mathcal{E}}
\newcommand{\cG}{\mathcal{G}}
\newcommand{\cF}{\mathcal{F}}
\newcommand{\cH}{\mathcal{H}}
\newcommand{\cM}{\mathcal{M}}
\newcommand{\cA}{\mathcal{A}}
\newcommand{\cR}{\mathcal{R}}
\newcommand{\cX}{\mathcal{X}}
\newcommand{\cP}{\mathcal{P}}
\newcommand{\bs}{\mathbf{s}}
\newcommand{\bb}{\mathbf{b}}
\newcommand{\Aut}{\mathrm{Aut}}
\newcommand{\Spec}{\mathrm{Spec}}
\newcommand{\ob}{\mathrm{ob}}
\newcommand{\lra}{\longrightarrow}
\newcommand{\hra}{\hookrightarrow}
\newcommand{\ra}{\rightarrow}
\newcommand{\ol}{\overline}
\newcommand{\ms}{\mapsto}
\newcommand{\ul}{\underline}
\newcommand{\RR}{\mathbb{R}}
\newcommand{\ZZ}{\mathbb{Z}}
\newcommand{\GG}{\mathbb{G}}
\newcommand{\CC}{\mathbb{C}}
\newcommand{\Hom}{\mathrm{Hom}}
\newcommand{\x}{\mathrm{x}}
\begin{document}

\title{ \'Etale Fundamental group of moduli of torsors under Bruhat-Tits group scheme over a curve}

\author[A. J. Parameshwaran]{A. J. Parameshwaran}
\address{Tata Institute of Fundamental Research, Mumbai }
\email{param@math.tifr.res.in}

\author[Y. Pandey]{Yashonidhi Pandey}
\thanks{The support of Science and Engineering Research Board under Mathematical Research Impact Centric Support File number: MTR/2017/000229 is gratefully acknowledged.}
\address{ 
Indian Institute of Science Education and Research, Mohali Knowledge city, Sector 81, SAS Nagar, Manauli PO 140306, India}
\email{ ypandey@iisermohali.ac.in, yashonidhipandey@yahoo.co.uk}

\begin{abstract} Let $X$ be a smooth projective curve over an algebraically closed field $k$. Let $\cG$ be a Bruhat-Tits group scheme on $X$ which is generically semi-simple and trivial. We show that the \'etale fundamental group of the moduli stack $\cM_X(\cG)$ of  torsors under $\cG$ is isomorphic to that of the moduli stack $\cM_X(G)$ of principal $G$-bundles. For any smooth, noetherian and irreducible stack $\cX$, we show that an inclusion of an open substack $\cX^\circ$, whose complement has codimension at least two, will induce an isomorphism of \'etale fundamental group. Over $\CC$, we show that
 the open substack of regularly stable torsors in $\cM_X(\cG)$ has complement of codimension at least two when $g_X \geq 3$. As an application, we show that the moduli space $M_X(\cG)$ of $\cG$-torsors is simply-connected. 

\end{abstract}
\subjclass[2000]{14F22,14D23,14D20}
\keywords{Bruhat-Tits group scheme, parahoric group, Moduli stack, \'Etale fundamental group}
\maketitle


\section{Introduction}
In \cite{sga1}, to a pointed scheme, Grothendieck associated a profinite group. These are called \'etale fundamental group of a scheme. His ideas are general enough to apply to any topos. In particular, they apply to the topos associated to the \'etale or smooth site of an algebraic stack. Developping on Grothendieck's ideas, Noohi in \cite{noohi} has defined the \'etale fundamental group of an algebraic stack and fibrations between them without using the language of topoi. In this paper, we follow his exposition. 

Let us now explain our setup. Let $X$ be a smooth projective curve over an algebraically closed field $k$. Pappas and Rapoport  have introduced a global Bruhat-Tits group scheme $\cG \ra  X$ \cite{prq}. Let $G$ be an almost simple simply-connected  group over $k$. Let $\cR$ denote a finite non-empty set of  points on $X$. Let $X^\circ= X \setminus \cR$. For $x \in X$, let $\mathbb{D}_x=\Spec(\hat{\cO_x})$, let $K_x$ be the quotient field of $\hat{\cO_x}$ and let $\mathbb{D}^\circ_x=\Spec(K_x)$. In this paper,  by a Bruhat-Tits group scheme $\cG \ra X$ we shall mean that  $\cG$ restricted to ${X^\circ}$ is isomorphic to $ X^\circ \times G$, and for any closed point $x \in X$,  $\cG$ restricted to $\mathbb{D}_x$ is a parahoric group scheme (cf \S \ref{gpsch}) such that the gluing functions take values in $Mor(\mathbb{D}^\circ_x,G)=G(K_x)$.  Let $\cM_X(\cG)$ denote the moduli stack of Bruhat-Tits group scheme torsors on $X$.

We construct a group scheme $\cG^{\mathbf{a}}$ which is a subgroup scheme of $\cG$ and which is Iwahori type in formal neighbourhoods of points of $\cR$. We construct morphisms $\cM_X(\cG^{\mathbf{a}}) \ra \cM_X(\cG)$ and $\cM_X(\cG^{\mathbf{a}}) \ra \cM_X(G)$ and show them to be fibrations. These are summarized in the diagram below, where the middle bride is called the Hecke correspondence by Balaji-Seshadri \cite{bs}:
\begin{equation*}
\xymatrix{
&& \cM_X(\cG^{\mathbf{a}}) \ar[rd] \ar[ld] & \cF l_{v_0} \ar[d] & \\
\cM_X^{rs}(\cG) \ar@{^{(}->}[r] \ar[d] & \cM_X(\cG) \ar[d] && \cM_X(G) \ar[d] & \cM^{rs}_X(G) \ar[d] \ar@{^{(}->}[l] \\
M_X^{rs}(\cG) \ar@{^{(}->}[r] & M_X(\cG) & &M_X(G) & M_X^{rs}(G) \ar@{^{(}->}[l]
}
\end{equation*}
Here $\cF l_{v_0}$ is the flag variety (cf (\ref{lgpflv})) and $M_X(\cG)$ and $M_X(G)$ are the moduli spaces. The square on the left only exists over the complex numbers as the notion of semi-stability is only defined so far over $\CC$ \cite{bs}. However the square on the right exists over any algebraically closed field.

Our first main result says that the \'etale fundamental group of $\cM_X(\cG)$ is isomorphic to that of $\cM_X(G)$. We also show  that $\pi_1(\cM_X(G))$ surjects onto $\pi_1(M_X(G))$ when  $g_X \geq 4$. Our main tool establishes that for a smooth stack $\cX$,  the inclusion $i: \cX^\circ \ra \cX$ of an open substack whose complement has codimension atleast two induces an isomorphism on $\pi_1$.

When $k=\CC$, as a corollary to \cite{bmp}, it follows that $\pi_1(\cM_X(\cG))$ is trivial. Then we show that $\pi_1(M_X(\cG))$  is trivial when $g(X) \geq 3$.



Let us make a remark on an alternate proof strategy. By \cite{bs}, over the complex numbers given $\cG$ there exists a Galois cover $p: Y \ra X$ with Galois group $\Gamma$ and a topological type $\tau$ such that the moduli stack of $\Gamma$-$G$ bundles of type $\tau$ is isomorphic to $\cM_X(\cG)$. This isomorphism of stacks does not seem to be adequate for computing the \'etale fundamental group. Given a stack $\cX$ together with a finite group action, there does not seem to be any relation, in general, between the \'etale fundamental group of $\cX$ and that of its fixed point substack $\cX^\Gamma$.

\subsection{Acknowledgement} The second author thanks Indranil Biswas and Arjun Paul for discussions and TIFR, Mumbai.

\section{Fundamenal group of stacks} \label{noohires}

In this preparatory section, we recall relevant notions and also fix notation.

\subsection{Algebraic Stacks} Our reference for the theory of stacks is \cite{lmb}. Let $S$ be an arbitrary scheme. In the rest of the paper it will often be $\Spec(k)$. Let $\cC$ be the site defined by endowing the category $Sch/S$ of schemes on $S$ with fppf topology. Let $\cX$ be a connected algebraic stack over $\cC$. Let $P:X \ra \cX$ be a smooth atlas of $\cX$. To $P$ we can associate a {\it groupoid-space} $X_\bullet$ namely $X \times_{\cX} X \xymatrix{ {} \ar@<1ex>[r] \ar[r] & {} } X$ (cf \cite[(2.4.3)]{lmb}). To this we can associate the $S$-groupoid $[X_\bullet]'$. It is a prestack whose associated stack $[X_\bullet]$ is $\cX$ itself (\cite[(3.4.3)]{lmb}). We will often reduce questions on stacks $\cX$ to grouppoid-spaces $X_\bullet$. Let us see reinterpretations of morphisms and $2$-morphisms of stacks in terms of groupoid-spaces. These will be used to prove that restriction from an algebraic stack $\cX$ to an open substack $\cX^\circ$ whose complement has codimension at least two induces an isomorphism on the \'etale fundamental groups. 

\subsubsection{Morphism of stacks} \label{morphism} Let $a: \cY \ra \cZ$ be a  morphism of algebraic stacks. It is said to be representable if for every morphism $U \ra \cZ$ where $U \in \ob(\cC)$, the fiber product $U \times_{\cZ} \cY$ is representable by an algebraic space. Let $p_Y: Y \ra \cY$ and $p_Z: Z \ra \cZ$ be  smooth atlases of $\cY$ and $\cZ$ respectively. By taking their presentations if necessary, we may suppose that they are schemes. Consider $a \circ p_Y: Y \ra \cY \ra \cZ$. The morphism $p_Z$ being smooth admits \'etale local sections. Replacing $Y$ by its  \'etale refinements if necessary, we may suppose that $a \circ p_Y$ equals $p_Z \circ A$ for some $A: Y \ra Z$. For the morphism $a$ we call such a choice of atlases $p_Y: Y \ra \cY$ and $p_Z: Z \ra \cZ$ as {\it adapted}. In terms of such adapted atlases, the morphism $a$ equivalently corresponds to equivalence classes of morphisms $A: Y \ra Z$ of schemes satisfying the condition $p_Z \circ A \circ q_0=p_Z \circ A \circ q_1$ i.e 
\begin{equation} \label{1.4Knutson}
 \xymatrix{ Y \times_{\cY} Y \ar@<1ex>[r]^{q_0} \ar[r]_{q_1} & Y \ar[r]^A & Z \ar[r]^{p_Z} & \cZ}
\end{equation} 
The equivalence relation on $A$ is the following: Let $A_0,A_1: Y \ra Z$ be morphisms satisfying (\ref{1.4Knutson}). Their compositions with $Z \ra \cZ$ agree if and only if there exists a $h$ as above such that $A_0=p_0 \circ h$ and $A_1= p_1 \circ h$. 
\begin{equation} \label{mordiag}
\xymatrix{
 & Y \ar@{.>}[ld]_{h}  \ar[r]^{p_Y} \ar[d]^{A_0} \ar@<1ex>[d]_{A_1} & \cY \ar[d]^a \\
Z \times_{\cZ} Z \ar@<1ex>[r]^{p_0} \ar[r]_{p_1} & Z \ar[r]^{p_Z} & \cZ
}
\end{equation}

Notice that (\ref{1.4Knutson}) is equivalent to the existence of a morphism 
\begin{equation} \label{realKnutson} A':Y \times_{\cY} Y \ra Z \times_{\cZ} Z
\end{equation} such that $p_i \circ A'= A \circ q_i$ for $i \in \{0,1\}$.  Whenever (\ref{realKnutson}) holds we get moreover a morphism $A_\bullet: cosk_{\cY}(Y) \ra cosk_{\cZ}(Z)$ where $cosk_{\cY}(Y)$ denotes the coskeleton functor. 

\subsubsection{$2$-morphisms} \label{twomorphism}
Let $f$ and $g$ be two maps from $\cY \ra \cZ$. We chose atlases of $\cY$ and $\cZ$ which are adapted for both of them. Let $F$ and $G$ represent $f$ and $g$ in the sense of \S \ref{morphism}. Let $F'$ and $G'$ be the induced maps $Y \times_{\cY} Y \ra Z \times_{\cZ} Z$.
 
By a simple homotopy $H_\Phi$ from $F$ to $G$ we mean two morphisms $H^0_\Phi,H^1_\Phi$ 
\begin{equation} \label{homotopy} H_{\Phi} :=\{ H_\Phi^0, H_\Phi^1 \}: Y  \ra X \times_{\cX} X 
\end{equation}  as below where the morphisms $p_0,p_1,q_0,q_1$ are understood
\begin{equation}
\xymatrix{
Y \times_{\cY} Y  \ar@<1ex>[d]^{F'} \ar[d]_{G'} \ar@<1ex>[r] \ar[r] & Y \ar@<1ex.>[ld]^{H_\Phi^0} \ar@{.>}[ld]_{H_\Phi^1}  \ar[r] \ar@<1ex>[d]^{F } \ar[d]_{G} & \cY \ar@<1ex>[d]^{f} \ar[d]_{g} \\
X \times_{\cX} X \ar@<1ex>[r] \ar[r] & X \ar[r] & \cX
}
\end{equation}
such that we have the following equality of maps $Y \times_{\cY} Y \ra X \times_{\cX} X$:
\begin{eqnarray} H^0_\Phi \circ q_0 & = & F' \\
 H^{0}_\Phi \circ q_1 & = & H^{1}_\Phi \circ q_0  \\
 H^{1}_\Phi \circ q_1 & =& G' 
\end{eqnarray}
and the following equality of maps $Y \ra X$
\begin{eqnarray}
 p_0 \circ H_\Phi^0 & =& F \\ 
 p_1 \circ H_\Phi^0 & = & p_0 \circ H_\Phi^{1}  \\ 
 p_1 \circ H_\Phi^1 & = & G.
 \end{eqnarray} 
This is just the restriction of a simplicial homotopy from $F_\bullet$ to $ G_\bullet$ viewed as morphisms between the simplicial schemes $cosk_{\cY}(Y)$ and $cosk_{\cZ}(Z)$.
It is also clear when and how one simple homotopy is composable with another.

Consider a $2$-morphism $\Phi: f \ra g$ between morphisms $f,g: \cY \ra \cX$. 
Such a $2$-morphism corresponds to a sequence $H_\Phi= \{H^0_\Phi,\cdots, H^k_\Phi \}$ of composable homotopies from $F$ to $G$. Here $k \geq 0$ must be an odd integer. We shall call $H_\Phi$ a {\it homotopy} from $F$ to $G$ (or $f$ to $g$) corresponding to $\Phi$. Conversely, any such homotopy $H$ gives a $2$-morphism which we will denote as $\Phi_H$.

\subsubsection{Covering map} \label{covering}  A representable morphism $f: \cY \ra \cX$ is said to be finite (resp.  \'etale) if for every $U \ra \cX$ as above, the morphism $U \times_{\cX} \cY \ra U$ is finite (resp. \'etale). A morphism $f:\cY \ra \cX$ of connected algebraic stacks is called a covering map if $f$ is representable, finite and \'etale map. 

\subsubsection{Dimension} \label{dim} We refer the reader to \cite[page 98,99]{lmb}. The notion of dimension of an algebraic stack $P: X \ra \cX$ is defined when $\cX$ is locally noetherian. Similarly for a point $x \in \cX$, relative dimension $dim_x(P)$ of $P$ at $x$ is defined. Now $dim_{x} \cX=dim X - dim_x P$. The function $x \ms dim_x(P)$ is compatible with base-change and is continous.

\subsubsection{Local construction} \label{lc} For $U \in ob(\cC)$, consider the site $Sch/U$ with fppf topology. By a $U$-space we mean a sheaf of sets on the site $Sch/U$. Let $\underline{Es}_U$ denote the category of $U$-spaces. Let $\underline{Es} \ra Sch/S$ denote the fibered category which for every $U \in ob(\cC)$ associates the category $\ul{Es}_U$.  Recall \cite[Defn 14.1.2]{lmb} that a local construction on a $S$-stack $\cX$ is a functor 
\begin{equation} \label{cartfunc}
\underline{\cY}: \cX \ra \ul{Es}
\end{equation}
satisfying the following condtion: for every $U \in ob(Sch/S)$ we have a functor
 \begin{equation}
 \ul{\cY}_U: \cX_U \ra \ul{Es}_U
 \end{equation}
 and for every $\phi: V \ra U$ we have a natural transformation $\epsilon_\phi$
 \begin{equation} \label{2morcart}
 \xymatrix{
 \cX_U \ar[rr]^{\phi^*} \ar[d]^{\ul{\cY}_U} && \cX_V \ar[d]^{\ul{\cY}_V} \ar@{=>}[lld]^{\epsilon_\phi} \\
 \ul{Es}_U \ar[rr]_{V \times_{\phi,U}(-)} && \ul{Es}_V
 }
 \end{equation}
such that for any $\psi: W \ra V$ we have
\begin{equation} \label{coc}
\epsilon_{\phi \circ \psi} = (W \times_{\psi,V} \epsilon_\phi) \circ \epsilon_\psi(\phi^*).
\end{equation}
We recall that to a local construction $\ul{\cY}$ we can associate a $S$-groupoid $\cY$ with a natural map to $\cX$ (cf \cite[(14.1.4)]{lmb}). By  \cite[Prop 14.1.5]{lmb} $\cY$ is a $S$-stack. A local construction $\ul{\cY}$ is said to be {\it algebraic} if for every $U \in ob(Sch/S)$, and every $x \in ob(\cX_U)$, the $U$-space $\cY_U(x)$ is algebraic. For a local algebraic construction $\ul{\cY}$ the morphism of $S$-stacks $\cY \ra \cX$ is representable. Further the morphism $\cY \ra \cX$ possesses a property $P$ if for every $U \in ob(Sch/S)$ and every $x \in ob(\cX_U)$, the morphism  $\cY_U(x) \ra U$ of algebraic spaces possesses $P$ (cf \cite[Prop (14.1.7)]{lmb}).

\subsection{Galois categories} \label{gc} Let us recall the Galois category $C_\cX$ associated to $\cX$. Its objects consist of pairs $(\cY,f)$ where $ f: \cY \ra \cX$ is a covering map. A morphism from $(\cY,f)$ to $(\cZ,g)$ consists of equivalence classes of pairs $(a,\Phi)$ where $a: \cY \ra \cZ$ is a morphism of stacks $\Phi: f \ra g \circ a$ is a $2$-morphism. Here we say that $(a, \Phi) \sim (b,\Psi)$ if there exists a $2$-morphism $\Gamma: a \ra b$ such that $g(\Gamma) \circ \Phi=\Psi$.

\subsection{Fundamenal functor} \label{ff} Let $x,x': \Spec(k) \ra \cX$ be two  geometric points of $\cX$. A hidden path from $x$ to $x'$ is defined to be a transformation from $x$ to $x'$. Let us recall the fundamental functor $F_x: C_\cX \ra \{ Sets \}$. To a covering $f: \cY \ra \cX \in Ob(C_\cX)$, it associates the set of equivalence classes of pairs $(y,\phi)$ where $y: \Spec(k) \ra \cY$ is a geometric point and $\phi: x \ra f(y)$ is a hidden path. Here we say that $(y,\phi) \sim (y',\phi')$ if there exists a $\beta: y \ra y'$ such that $f(\beta) \circ \phi=\phi'$.

Noohi shows that the pair $(C_\cX,F_x)$ forms a Galois category \cite[Thm 4.2]{noohi}.
The fundamental group $\pi_1(\cX,x)$ is defined to be the group of self-transformations of the functor $F_x$. It is a profinte group. The hidden fundamental group $\pi^h_1(\cX,x)$ is the group of self-transformation of $x: \Spec(k) \ra \cX$.

\subsection{Fibrations} \label{fibration} In this subsection, we want to define fibrations for algebraic stacks. We first start with the definition of fibrations for algebraic spaces.
Let $f: Y \ra X$ be a morphism of algebraic spaces. 

Let us recall Axiom A from \cite{noohi}: If we restrict $f$ to any connected component of $Y$ and the corresponding connected component of $X$, then for any choice of base points $y$ and $x=f(y)$, the sequence
\begin{equation} 
\pi_1(Y_x,y) \ra \pi_1(Y,y) \ra \pi_1(X,x) \ra \pi_0(Y_x,y) \ra \{*\},
\end{equation}
is exact. A surjective morphism $f: Y \ra X$ of algebraic spaces is called a fibration if every base-extension of $f$ satisfies Axiom A. 

For example, finite $\acute{e}$tale morphisms are fibrations. Also when $X$ is locally noetherian, then any proper flat morphism $f: Y \ra X$ that has geometrically connected fibers is a fibration by \cite[Expos\'e X,1.6]{sga1}.

We now define fibrations for algebraic stacks. A pointed morphism between pointed stacks $(\cY,y)$ and $(\cX,x)$ consists of a morphism $f: \cY \ra \cX$ together with a transformation $\phi: x \ra f(y)$. One says that a morphism $f: \cY \ra \cX$ of algebraic stacks is a fibration if for any choice of base points $y: \Spec(k) \ra \cY$ and $x: \Spec(k) \ra \cX$ such that $f(y)$ is $2$-isomorphic to $x$, the following sequence is exact:
\begin{equation} \label{fibseq}
\pi_1(\cY_x,y) \ra \pi_1(\cY,y) \ra \pi_1(\cX,x) \ra \pi_0(\cY_x,y) \ra \{*\}
\end{equation} 
Here by $\pi_0$ of an algebraic stack, we mean the $\pi_0$ of its atlas.

When $f: \cY \ra \cX$ is a representable morphism, then it is a fibration if and only if for any algebraic space or scheme $X \ra \cX$, the base-extension of $f$ is a fibration of algebraic spaces as defined above \cite[A.4]{noohi}. Further, by \cite[\S 8, page 88, (3)]{noohi} being a fibration is local (on the target) in the fppf topology. 


\section{Behaviour of $\pi_1$ with respect to restriction to open substacks}

Let us mention a convention. Given a morphism of algebraic stacks $a: \cX \ra \cY$ as in \S \ref{morphism}, we will choose atlases adapted for the morphism $a$ and denote the corresponding morphism between them by capital letters $A:X \ra Y$ as in (\ref{1.4Knutson}).

\begin{prop} \label{codim2} Let $\cX/k$ be a connected, irreducible, noetherian and smooth algebraic stack and $\cX^\circ$ be a non-empty open substack. Let $\overline{x}: \Spec(k) \ra \cX^\circ$ be a closed point. The restriction morphism $\pi_1(i): \pi_1(\cX^\circ, \overline{x}) \ra \pi_1(\cX,\overline{x})$ induced by the inclusion is surjective. When the complement of $\cX^\circ$ has codimension at least two, then $\pi_1(i)$ is  an isomorphism.
\end{prop}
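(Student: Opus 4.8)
The plan is to prove surjectivity and injectivity of $\pi_1(i)$ separately, translating both into concrete statements about extending covering maps from $\cX^\circ$ to $\cX$ via the groupoid-space / atlas formalism set up in \S\ref{morphism} and \S\ref{twomorphism}. Recall that by the theory of Galois categories, $\pi_1(i)$ is surjective precisely when every connected covering of $\cX^\circ$ that arises as the restriction of a covering of $\cX$ stays connected (equivalently, $C_\cX \ra C_{\cX^\circ}$ is fully faithful with the image closed under subobjects), and $\pi_1(i)$ is an isomorphism precisely when restriction $C_\cX \ra C_{\cX^\circ}$ is an equivalence of categories. So the whole proposition reduces to two things: (i) every covering of $\cX^\circ$ extends to a covering of $\cX$ (essential surjectivity), which is where the codimension-two hypothesis enters, and (ii) the restriction functor is fully faithful (which should hold with no codimension hypothesis and gives the unconditional surjectivity). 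I would set up an atlas $P: X \ra \cX$ with $X$ a smooth scheme, let $X^\circ = P^{-1}(\cX^\circ)$, and note that since $P$ is smooth and codimension is preserved under smooth (flat) pullback, the complement $X \setminus X^\circ$ has codimension at least two in $X$ whenever the complement of $\cX^\circ$ does; the groupoid-space $X^\circ_\bullet$ is then the restriction of $X_\bullet = X\times_\cX X \rightrightarrows X$.

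For surjectivity (the unconditional part), I would argue that the restriction functor on Galois categories is fully faithful. Given a covering $f:\cY\ra\cX$, a morphism of coverings over $\cX^\circ$ is a pair $(a,\Phi)$ as in \S\ref{gc}, which on adapted atlases is represented by a scheme morphism $A^\circ: Y^\circ \ra Z^\circ$ together with a homotopy; since $\cY\ra\cX$ is finite \'etale and $Y,Z$ are the pullbacks of a smooth $X$, the schemes $Y,Z$ are smooth (hence normal), and a morphism defined on the open dense $Y^\circ$ whose complement has codimension $\ge 2$ into a scheme finite over the normal $X$ extends uniquely; the homotopy components, being maps into the finite-\'etale $X_\bullet$ over the normal base, likewise extend uniquely. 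Full faithfulness of restriction on Galois categories forces $\pi_1(i)$ to be surjective by the standard equivalence between subobject-closed fully faithful exact functors and epimorphisms of fundamental groups. This handles the first assertion.

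For the codimension-two case I must further show essential surjectivity: every covering $\cY^\circ \ra \cX^\circ$ is the restriction of some covering $\cY\ra\cX$. Passing to the atlas, a covering of $\cX^\circ$ is encoded by a finite \'etale $V^\circ \ra X^\circ$ together with descent data, i.e. an isomorphism over the groupoid $X^\circ_\bullet$ satisfying a cocycle condition. Because $X$ is smooth (so normal) and $X\setminus X^\circ$ has codimension $\ge 2$, I would invoke purity for the \'etale fundamental group, in the Zariski--Nagata / SGA form (cf.\ \cite[Expos\'e X]{sga1}): a finite \'etale cover of $X^\circ$ extends (uniquely) to a finite \'etale cover $V \ra X$. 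Uniqueness of the extension, combined with the already-established extension of morphisms, lets me extend the descent isomorphism over $X_\bullet$ and verify the cocycle identity over $X_\bullet\times_X X_\bullet$ by the same normality-plus-codimension extension principle; this produces a covering $\cY \ra \cX$ restricting to $\cY^\circ$. Thus restriction is an equivalence of Galois categories and $\pi_1(i)$ is an isomorphism.

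The main obstacle I anticipate is the purity step: Zariski--Nagata purity applies cleanly to schemes (and algebraic spaces) that are regular, but here the relevant purity must hold for the stack $\cX$, so I must make sure the argument is genuinely carried out on the smooth atlas $X$ and its coskeleton and that extending finite \'etale covers together with their groupoid descent data really descends to a covering of the stack in the sense of \S\ref{covering}. Concretely, the delicate point is checking that the extended isomorphism over $X_\bullet$ satisfies the cocycle condition on $X_\bullet \times_X X_\bullet$: one wants to know that this triple-fiber-product scheme is still normal with the complement of the open part in codimension $\ge 2$, so that an equality of the two morphisms holding on a dense open with codimension-$\ge 2$ complement forces equality everywhere. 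Verifying normality and the codimension bound for these fiber products of the (smooth, finite-\'etale over $X$) groupoid terms, and thereby legitimately applying the extension/agreement-of-morphisms lemma to the cocycle, is the technical heart of the proof.
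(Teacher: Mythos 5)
Your main argument for the codimension-two case --- pass to a smooth atlas $P\colon X\to\cX$, observe that $X\setminus X^\circ$ still has codimension at least two, extend the finite \'etale cover $Y^\circ\to X^\circ$ to $Y\to X$ by Zariski--Nagata purity, extend the descent isomorphism over $X\times_\cX X$, and verify the cocycle condition on the triple fibre product by density --- is exactly the proof in the paper. The step you flag but do not carry out, namely descending the groupoid-equivariant cover $Y\to X$ back to a covering stack $\cY\to\cX$, is done in the paper via the local construction formalism of \cite[\S 14.1]{lmb}: for each $u\colon U\to\cX$ one applies fpqc descent to the quasi-affine morphism $Y_u\to X_u$ along the faithfully flat quasi-compact $X_u\to U$ and checks the resulting functor is cartesian. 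For the cocycle check you do not actually need normality and codimension two on $X\times_\cX X\times_\cX X$: the two extended maps agree on a dense open subset of an integral scheme, which already forces equality. The paper also spells out the extension of $2$-morphisms (the homotopies $Y\to X\times_\cX X$) and the matching of fibre functors, which you fold into ``full faithfulness''; your aside that the homotopy components land in something finite \'etale is not right ($X\times_\cX X$ is only smooth over $X$), and the paper instead extends the two projections of each $H^i_{\Phi^\circ}$ separately and uses integrality of $Y$ to see the extensions are compatible.

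The one genuine slip is in the unconditional surjectivity. You derive it from full faithfulness of the restriction functor, but the extension lemma you invoke there assumes the complement of $Y^\circ$ has codimension at least two, which is not available in the first assertion of the proposition, where $\cX^\circ$ is an arbitrary non-empty open substack. The needed statement is still true --- for a morphism over $X^\circ$ into a scheme finite \'etale over the normal connected $X$, density of $Y^\circ$ in $Y$ suffices, since such a morphism is a clopen component of $Z\times_X Y$ of degree one over $Y$, and components and degrees are detected on a dense open --- but as written your argument does not cover the codimension-one case. The paper sidesteps this with a softer argument: since $\cX$ is irreducible, every irreducible covering $\tilde{\cX}\to\cX$ pulls back to the non-empty open, hence irreducible, hence connected substack $\cX^\circ\times_\cX\tilde{\cX}$, and a continuous homomorphism of profinite groups is surjective as soon as connected objects pull back to connected objects. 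You should either repair your extension lemma as above or substitute this connectivity criterion.
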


\begin{proof} Since $\cX$ is irreducible, so for any irreducible covering $\tilde{\cX} \ra \cX$, the fiber product $ \cX^\circ \times_{\cX} \tilde{\cX}$ remains irreducible because it is a non-empty open substack of $\tilde{\cX}$. Since $\pi_1(\cX,\overline{x})$ is a profinite group, this show that $\pi_1(i)$ is surjective.

 Let us denote by $P: X \ra \cX$ an atlas of $\cX$. So $P$ is a representable, surjective and smooth morphism. By taking a presentation of the algebraic space $X$, we may suppose that it is a scheme. Since $\cX/k$ is smooth and noetherian, so $X$ is smooth and noetherian. We may further assume that it is a scheme of finite type over $k$. Let $P^\circ: X^\circ= \cX^\circ \times_{\cX} X \ra \cX^\circ$ be the corresponding atlas of $\cX^\circ$.

Let us first show that the Galois categories $C_\cX$ and $C_{\cX^\circ}$ have the same objects. Given an \'etale cover $\cY \ra \cX$, we get an \'etale cover of $\cX^\circ$ by taking fiber-product. Conversely, let us suppose that we have an \'etale cover $\cY^\circ \ra \cX^\circ$. Let us construct the corresponding \'etale cover of $\cX$. Set $Y^\circ = \cY^\circ \times_{\cX^\circ} X^\circ$. Since it is  an \'etale cover of $X^\circ$, it is a scheme. Let us denote by $p_1,p_2: X^\circ \times_{\cX^\circ} X^\circ \ra X^\circ$ the two projection maps to $X^\circ$.  A morphism $\cY^\circ \ra \cX^\circ$ furnishes a descent-data
\begin{equation} \label{descentdata}
\phi^\circ: p_1^*(Y^\circ) \ra p_2^*(Y^\circ),
\end{equation}
satisfying the usual co-cycle condition $p_{13}^*(\phi^\circ)=p_{23}^*(\phi^\circ) \circ p_{12}^*(\phi^\circ)$ where $p_{ij}: X^\circ \times_{\cX} X^\circ \times_{\cX} X^\circ \ra X^\circ \times_{\cX} X^\circ$ denotes the usual projection morphism.

Consider the inclusion of schemes $i: X^\circ \ra X$.  The fibers of $P$ and $P^\circ$ are isomorphic whenever those of $P^\circ$ are non-empty. Since the morphism $X \ra \cX$ is smooth, and $dim(P)$ is a continous function (cf \S \ref{dim}) so the complement  of $X^\circ \ra X$ has codimension at least two. Let us lift the given closed point $\ol{x} : \Spec(k) \ra \cX^{\circ}$ to a closed point $\ol{x}': \Spec(k) \ra X^\circ$. Therefore, $\pi(i): \pi_1(X^\circ, \overline{x}) \ra \pi_1(X, \overline{x})$ is an isomorphism of profinite groups.  Therefore there exists an \'etale  cover $Y \ra X$ such that $Y^\circ = Y \times_X X^\circ$. Similarly consider $j: X^\circ \times_{\cX^\circ} X^\circ \ra X \times_{\cX} X$. Again $j$ is an open immersion whose complement is of codimension at least two. Therefore, the morphism $\phi^\circ$ extends uniquely to a morphism
\begin{equation} \label{dd}
\phi: p_1^*(Y) \ra p_2^*(Y).
\end{equation}
Now $p_{13}^*(\phi)$ and $p_{23}^*(\phi) \circ p_{12}^*(\phi)$ agree on the open subset $X^\circ \times_{\cX^\circ} X^\circ\times_{\cX^\circ} X^\circ$ of $X \times_{\cX} X \times_{\cX} X$ because $\phi$ restricts to $\phi^\circ$. The latter is integral because $\cX/k$ is smooth. Therefore the open set where these morphisms agree is also dense. So these morphisms agree everywhere. So $\phi$ is a descent data of $Y \ra X$ over $P: X \ra \cX$. Using this we will construct now an algebraic stack $\cY$ \'etale over $\cX$.

Let us construct a functor $\ul{\cY}: \cX \ra \ul{Es}$ as in (cf (\ref{cartfunc})). For any $u: U \ra \cX$ over $k$, let $X_u$ denote $ U \times_{u,\cX,P} X$, let $Y_u$ denote $ U \times_{u,\cX,P} Y$ and let $\phi_u$ denote the corresponding pull-back from (\ref{dd}) via $u$.  The morphism $X_u \ra U$ is faithfully flat because $P$ is smooth. Also since $P$ is smooth so it is a morphism of finite type. So the morphism $X_u \ra U$ is quasi-compact. Further $Y_u \ra X_u$ is quasi-affine because it is a finite morphism obtained via base-change from the \'etale covering $\cY \ra \cX$ (cf \S \ref{covering}). Therefore by fpqc descent \cite[Chap 6, Thm 6]{blr}, we get a scheme $\ul{\cY}_u$ on $U$. Further $\ul{\cY}_u \ra U$ is also \'etale. This defines a functor $\ul{\cY}_U: \cX_U \ra \ul{Es}_U$. This functor is cartesian. Indeed, (\ref{2morcart}) says simply that descent commutes with base-change and (\ref{coc}) follows from the fact that this commutation is natural.  Therefore $\ul{\cY}$ is a local construction. It is an {\it algebraic local construction} because $\ul{\cY}_u$ is a scheme and hence an algebraic space. Let $\cY$ be the associated stack which has a natural morphism to $\cX$ (cf \S \ref{lc}). By construction, the fiber product of $\cY \times_{\cX} X$ is naturally $Y$, and so $Y \ra \cY$ is an atlas. So $\cY$ is an algebraic stack. Further the map $\cY \ra \cX$ is \'etale because $Y_u \ra U$ is \'etale (cf \cite[Defn 14.1.6]{lmb}). Therefore $\cY \ra \cX$ is an \'etale cover of stacks.

 Let us show that morphisms in the category $C_{\cX^\circ}$ and $C_{\cX}$ are the same. Morphisms in $C_{\cX}$ restrict to morphisms in $C_{\cX^\circ}$. Conversely let $(a^\circ, \Phi^\circ): (\cY^\circ, f) \ra (\cZ^\circ,g)$ represent a morphism between \'etale covers of $\cX^\circ$. Let $Y^\circ$ and $Z^\circ$ denote the fiber-products of $\cY $ and $\cZ$ with $X^\circ$. By replacing the atlases $Y^\circ$ and $Z^\circ$ by atlases adapted for the morphism  $a^\circ$ as in \S \ref{morphism}, we get a morphism $A^\circ: Y^\circ \ra Z^\circ$ (cf (\ref{morphism})). It is compatible with the descent datas $\phi^\circ_Y$ and $\phi^\circ_Z$ i.e the following diagram 
\begin{equation}
\xymatrix{
p_1^*(Y^\circ) \ar[r]^{\phi^\circ_Y} \ar[d]_{p_1^*(A^\circ)} & p_2^*(Y^\circ) \ar[d]^{p_2^*(A^\circ)} \\
p_1^*(Z^\circ) \ar[r]^{\phi^\circ_Z} & p_2^*(Z^\circ)
}
\end{equation}  
commutes. If $Y$ and $Z$ are \'etale covers of $X$ extending $Y^\circ$ and $Z^\circ$, then $A^\circ$ extends to $A:Y \ra Z$. The morphisms $ p_2^*(A) \circ \phi_Y$ and $\phi_Z \circ p_1^*(A) : p_1^*(Y) \ra p_2^*(Z)$ are equal because they agree on the open subset $p_1^*(Y^\circ)$ of $p_1^*(Y)$ which is an integral scheme. Therefore $A$ descends to a morphism $a:  \cY \ra\cZ$ of algebraic stacks. Thus $a^\circ$ extends to $a$. Thus $1$-morphisms extend.

Suppose we are given $\Phi^\circ: f^\circ \ra g^\circ \circ a^\circ$ which is a $2$-morphism. Now $f^\circ$ and $g^\circ$ extend. We seek to find a $2$-morphism $\Phi: f \ra g \circ a$ between morphisms from $\cY$ to $\cX$ extending $\Phi^\circ$. Replacing $Y$ and $Z$ by atlases adapted for the morphisms $f$ and $g$ as in \S \ref{morphism}, we get morphisms $F: Y \ra X$ and $G: Z \ra X$ representing $f: \cY \ra \cX$ and $g: \cZ \ra \cX$ in sense of diagram (\ref{mordiag}).  By \S \ref{twomorphism}, for some odd integer $k \geq 0$, the $2$-morphism $\Phi^\circ$ gives rise to a sequence of composable homotopies 
\begin{equation}
H_{\Phi^\circ}:= \{H^0_{\Phi^\circ}, \cdots, H^k_{\Phi^\circ} \}: Y \times_X X^\circ \ra (X \times_{\cX} X)_{X^\circ}
\end{equation} This extends to a homotopy $H_\Phi:=\{H_\Phi^0, \cdots H_\Phi^k\}:  Y \ra X \times_{\cX} X$ as follows. 
Firstly $p_1 H^0_{\Phi^\circ}=F^\circ$ which, we have seen before, extends as $F$. Similarly $p_2 H^0_{\Phi^\circ}$ extends. The extensions of $p_1 \circ H^0_{\Phi^\circ}$ and $p_2 \circ H^0_{\Phi^\circ}$ define a morphism $Y \ra Y \times_{\cX} Y$. They  agree on $Y^\circ$, so they agree on $Y$. Thus together they give a morphism which we denote as $H^0_{\Phi}$ which extends $H^0_{\Phi^\circ}$. Similarly for $1 \leq i \leq k$, each of $H^i_{\Phi^\circ}$ extends to $H^i_{\Phi}$. Now $p_2 \circ H^0_{\Phi^\circ}=p_1 \circ H^1_{\Phi^\circ}$ are morphism $Y^\circ \ra X^\circ$. Each of these extend individually to morphisms $Y \ra  X$.  These extensions agree on $Y^\circ$, and $Y$ is an integral scheme, so these extensions agree everywhere over $Y$. By repeating the above arguements, we can show that the extensions $H^i_{\Phi}$ satisfy the required relations of \S \ref{twomorphism}. So simple homotopies extend as simple homotopies and remain composable. 


 Let us check that extension of homotopies passes through the equivalence relation on them i.e if $(a^\circ,\Phi^\circ) \sim (b^\circ,\Psi^{\circ})$ then their extensions are also related.  If $(a^\circ,\Phi^\circ) \sim (b^\circ,\Psi^\circ)$, then by definition (cf \S \ref{gc}) there exists a $2$-morphism $\Gamma^\circ: a^\circ \ra b^\circ$ such that $g(\Gamma^\circ) \circ \Phi^\circ = \Psi^\circ$. Corresponding to $\Gamma^\circ$, for some odd integer $k \geq 0$, we have a homotopy 
\begin{equation}
H_{\Gamma^\circ}:=\{H^0_{\Gamma^\circ}, \cdots H^k_{\Gamma^\circ} \}: Y^\circ \ra (Z \times_{\cZ} Z)_{\cZ^\circ}
\end{equation}
 from $A^\circ$ to $B^\circ$. As before, it extends to a homotopy $H:=\{H_\Gamma^0, \cdots, H_\Gamma^k \}$ from $A$ to $B$. This gives a $2$-morphism $\Gamma: a \ra b$. We have to check  that $g(\Gamma) \circ \Phi=\Psi$. With notations as before consider 
\begin{equation}
\xymatrix{
Y \times_{\cY} Y \ar@<1ex>[rr] \ar[rr] && Y   \ar[rr] \ar@<1ex>[d]_A \ar[d]^B \ar[rdd]^F 
\ar@<1ex>[lld]_{H^0_\Gamma} \ar[lld]^{H^k_\Gamma}
&& \cY \ar@<1ex>[d]^{b} \ar[d]_{a} \ar[rdd]^f \\ 
Z \times_{\cZ} Z \ar@<1ex>[rr] \ar[rr] \ar[rd]^{G'} && Z  \ar[rr] \ar[rd]_G && \cZ \ar[dr]^g \\ 
& X \times_{\cX} X \ar@<1ex>[rr] \ar[rr] && X \ar[rr] && \cX } 
\end{equation}
Just as in (\ref{realKnutson}), $G: Z \ra X$ furnishes $G':Z \times_{\cZ} Z \ra X \times_{\cX} X$. By $G(H_\Gamma)$ let us agree to mean $\{G'(H^0_\Gamma), \cdots G'(H^k_\Gamma) \}$.
Just as in (\ref{homotopy}), for some odd  $j \geq 0$ let \begin{equation}  H_{\Psi} :=\{ H_\Psi^0, \cdots, H_\Psi^j \}: Y  \ra (X \times_{\cX} X) 
\end{equation}  denote a homotopy between $F$ and $G \circ B$.
It follows that $G(H_\Gamma) \circ H_{\Phi}=H_{\Psi}$. Thus we have the desired $g(\Gamma) \circ \Phi=\Psi$ extending $g(\Gamma^\circ) \circ \Phi^\circ =\Psi^\circ$.

 Let $\overline{x}: \overline{k} \ra \cX^\circ$ be the given geometric point. We want to check that the fiber functors for $\cX^\circ$ and $\cX$ are also isomorphic. This follows by repeating the preceeding arguments of construction of \'etale covers of $\cX^\circ$ and $\cX$ and applying the extension of $2$-morphisms above to hidden paths for $\cX^\circ$ and $\cX$. This shows that $\pi_1(\cX^\circ)$ is isomorphic to $\pi_1(\cX)$.
 
\end{proof}

\section{Local group theoretical data of parahoric group schemes} \label{lgpthedata}

 Let $A:=k[[t]]$ and $K:=k((t))=k[[t]][t^{-1}]$, where $t$ denotes a uniformizing parameter.
Let $G$ be a {\em semisimple simply connected affine algebraic
group} defined over $k$. We now want to consider the group $G(K)$. 

All the notions in this section hold, not only for $K$, but for a connected reductive group over a local field. This is called the twisted case in \cite{pradv}. But for simplicity, {\it we will specialize these to the untwisted case namely that of $G_k(K)$ over $K$}.  

We shall fix a maximal torus $T \,\subset\, G$ and let
$Y(T)\,=\, \Hom(\GG_m,\, T)$ denote the group of all
one--parameter subgroups of $T$. For each maximal torus $T$ of $G$, the {\it
standard affine apartment} $\cA_T$  is an affine space under $Y(T)
\otimes_\ZZ \RR$.  In \cite{bt1} there is no preferred choice of origin in $\cA_T$. But
for recalling parahoric groups schemes, {\it we may identify $\cA_T$ with $Y(T) \otimes_\ZZ \RR$}
(see \cite[\S~2]{bs}) by choosing a point $v_0 \in \cA_T$. This $v_0$ is also called an origin. For a root $r$ of $G$ and an integer $n \in \ZZ$, we get an affine functional
\begin{equation} \label{afffunc} \alpha=r + n : \cA_T \ra \RR, x \ms r(x -v_0) +n.
\end{equation}
These are called the {\it affine roots} of $G$. For any point $x \in \cA_T$, let $Y_x$ denote the set of affine roots vanishing on $x$. For an integer $n\geq 0$, define
\begin{equation} \label{facetdefn}
\cH_n=\{x \in \cA_T | |Y_x|=n \}.
\end{equation}
A facet $\sigma$ of $\cA_T$ is defined to be a connected component of $\cH_n$ for some $n$. The dimension of a facet is its dimension as a real manifold. 

Let $R\,=\,R(T,G)$ denote the root system of $G$ (cf. \cite[p. 125]{springer}). Thus for
every $r \,\in\, R$, we have
the root homomorphism $u_r \,: \, \GG_a\,\lra\, G$ \cite[Proposition 8.1.1]{springer}. For any non-empty subset
$\Theta\,\subset\, \cA_T$, the {\it parahoric subgroup} 
$\cP_\Theta\,\subset\, G(K)$ is
defined (\cite[Page 8]{bs}) as 
\begin{equation} \label{defnpara}
\cP_\Theta = <T(A) \, , \ u_r(t^{m_r}A) \,\, | \,\, m_r\,=\,m_r(\Theta) \,=\, - \lfloor {\rm inf}_{\theta \in \Theta} (\theta,r) \rfloor >.
\end{equation}  
{\it In this paper $\Theta$ will always be either a {\em facet} or a point of $\cA_T$}.

Moreover, by \cite[Section 1.7]{bt} we have an affine flat smooth group scheme $\cG_\Theta\,\lra\, \Spec(A)$ corresponding to $\Theta$. This is called the {\it parahoric group scheme} associated to $\Theta$. It satisfies the following properties that characterize it. The set of $K$--valued (respectively,
$A$--valued) points of $\cG_\Theta$ is identified with $G(K)$ (respectively,
$\cP_\Theta$). The group scheme $\cG_\Theta$ is uniquely determined by its
$A$--valued points. For a facet $\sigma \subset \cA_T$, let  $\cG_\sigma$ be the parahoric group scheme by $\sigma$.

For a facet $\sigma$, we may check (\ref{defnpara}) for any $\theta$ in  $\sigma$.  The pro-unipotent radical $\cP_\Theta^u \subset \cP_\Theta$ is defined as 
\begin{equation} \label{defnparau}
\cP_\Theta^u = <T(1+t) \, ,\ u_r(t^{1-\lceil (\theta,r) \rceil }A) | \theta \in \Theta>.
\end{equation} 
Let $\cG_\Theta^u \ra \Spec(A)$ denote the affine flat group scheme corresponding to $\cP^u_\Theta$.

\subsection{Alcove} \label{alcove}
We choose a Borel $B$ in $G/k$ containing $T$. This determines a choice of positive roots.  Let 
$\mathbf{a}_0$ denote the unique alcove in $\cA_T$ whose closure  contains $v_0$ and is contained in the finite Weyl chamber determined by positive simple roots. The affine walls defining $\mathbf{a}$ determine a set $\mathbf{S}$ of simple {\it affine roots}. We will denote these simple roots by the symbols $\{\alpha_i\}$.

\subsection{The  closed fiber of $\cG_{\sigma}$} The results of this subsection are surely known to experts. But we couldn't find a suitable reference. 
 
For a facet $\sigma$ (or a point $p \in \cA_T$) let us denote the set of affine roots vanishing at $\sigma$ by
$Y_\sigma$ (or $Y_p$). It is possible to identify $Y_\sigma$ with a closed sub root system of the root system of $G$ as in \cite{borels} and we will do so. We refer the reader to \cite[4.6.12]{bt} for


\begin{prop}  \label{raghu} The root system of the reductive quotient $\cG_\sigma/\cG^u_\sigma$ of the special fiber of $\cG_\sigma$ is given by $Y_\sigma$.
\end{prop}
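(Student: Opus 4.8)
The plan is to reduce the statement to an explicit weight computation on the special fibre, using the descriptions of $\mathrm{Lie}(\cG_\sigma)$ and $\mathrm{Lie}(\cG^u_\sigma)$ read off from the generators in (\ref{defnpara}) and (\ref{defnparau}). Writing $\gfr_r = A\cdot e_r$ for the root space of $r\in R$, I would first record that, as $A$-lattices inside $\gfr\otimes_k K$, one has
\[
\mathrm{Lie}(\cG_\sigma) = \big(\mathrm{Lie}(T)\otimes_k A\big)\oplus \bigoplus_{r\in R} t^{m_r}\gfr_r,\qquad m_r=-\lfloor \inf_{\theta\in\sigma}(\theta,r)\rfloor,
\]
and, with torus part coming from $T(1+t)$,
\[
\mathrm{Lie}(\cG^u_\sigma)=\big(t\,\mathrm{Lie}(T)\otimes_k A\big)\oplus \bigoplus_{r\in R} t^{e_r}\gfr_r,\qquad e_r=1-\lceil (\theta,r)\rceil,
\]
where $e_r$ will be seen to be independent of $\theta\in\sigma$. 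Since both group schemes are smooth over $A$, these Lie algebras commute with the base change to $k$, and the special fibre $\ol{\cG}_\sigma:=\cG_\sigma\otimes_A k$ fits in a short exact sequence that presents $\mathrm{Lie}(M)$, for $M:=\ol{\cG}_\sigma/(\cG^u_\sigma\otimes_A k)$, as $\big(\mathrm{Lie}(\cG_\sigma)\otimes_A k\big)\big/\im\!\big(\mathrm{Lie}(\cG^u_\sigma)\otimes_A k\big)$.

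Next I would compute the $T_k$-weight decomposition of $\mathrm{Lie}(M)$ directly from the lattice inclusion $\mathrm{Lie}(\cG^u_\sigma)\subset\mathrm{Lie}(\cG_\sigma)$, noting that $T_k$, the reduction of $T(A)/T(1+t)=T(k)$, is a maximal torus of $M$. In weight $0$ the subspace $t\,\mathrm{Lie}(T)\otimes A$ maps to $0$ in $\mathrm{Lie}(T)\otimes_A k$, so the weight-$0$ part of $\mathrm{Lie}(M)$ is exactly $\mathrm{Lie}(T_k)$. On the weight-$r$ line the induced map $t^{e_r}\gfr_r/t^{e_r+1}\gfr_r\to t^{m_r}\gfr_r/t^{m_r+1}\gfr_r$ is an isomorphism when $e_r=m_r$ and is the zero map when $e_r=m_r+1$; hence the weight-$r$ space of $\mathrm{Lie}(M)$ is one-dimensional precisely when $e_r=m_r+1$ and vanishes when $e_r=m_r$.

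It then remains to identify $\{\,r : e_r=m_r+1\,\}$ with $Y_\sigma$, which is the combinatorial heart of the argument. Here I would use the defining property of a facet: the only affine roots vanishing anywhere on the open cell $\sigma$ are those in $Y_\sigma$, for if some $r+m$ vanished at an interior point without vanishing on all of $\sigma$, that point would carry strictly more vanishing affine roots, contradicting $\sigma\subseteq\cH_n$. Consequently, for $r$ with $(\cdot,r)$ non-constant on $\sigma$ the open interval $\{(\theta,r):\theta\in\sigma\}$ contains no integer and lies in a single $(-m-1,-m)$, forcing $\lfloor\inf_\theta(\theta,r)\rfloor=-m-1$ and $\lceil(\theta,r)\rceil=-m$, whence $e_r=m_r=m+1$; for $(\cdot,r)$ constant the common value is an integer exactly when $r\in Y_\sigma$, in which case a direct evaluation of the floor and ceiling gives $e_r=m_r+1$, and when it is non-integral one again gets $e_r=m_r$. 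This short case check simultaneously shows that $e_r$ is independent of $\theta\in\sigma$ and that the nonzero $T_k$-weights on $\mathrm{Lie}(M)$ are exactly the elements of $Y_\sigma$.

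Finally, invoking the Bruhat--Tits structure theory cited before the statement, $\cG^u_\sigma\otimes_A k$ is the unipotent radical of $\ol{\cG}_\sigma$, so $M$ is genuinely reductive; for a reductive group the nonzero weights of a maximal torus on its Lie algebra coincide with its roots, in every characteristic, and the previous step therefore identifies the root system of $M$ with $Y_\sigma$. I expect the main obstacle to be precisely this last identification $\cG^u_\sigma\otimes_A k=R_u(\ol{\cG}_\sigma)$ --- equivalently, that the lattice $\mathrm{Lie}(\cG^u_\sigma)$ cuts out the full unipotent radical rather than a smaller unipotent subgroup --- together with the assertion that $Y_\sigma$ is closed under the root-system operations; both are supplied by the quoted Bruhat--Tits results, so that the genuinely new content is the elementary but careful floor/ceiling bookkeeping of the middle step.
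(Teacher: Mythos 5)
Your computation is correct, but it is worth saying up front that the paper does not actually prove this proposition: it simply points the reader to \cite[4.6.12]{bt} (after remarking that the results of the subsection ``are surely known to experts''). So your proposal is a genuinely different route --- an actual argument where the paper has only a citation. What your approach buys is an explicit and verifiable mechanism: the description of $\mathrm{Lie}(\cG_\sigma)$ and $\mathrm{Lie}(\cG^u_\sigma)$ as $A$-lattices via the schematic root data, the observation that on the weight-$r$ line the reduction map is an isomorphism when $e_r=m_r$ and zero when $e_r=m_r+1$, and the floor/ceiling case analysis showing $e_r=m_r+1$ exactly when $(\cdot,r)$ is constant on $\sigma$ with integer value, i.e.\ when $r$ is the vector part of an element of $Y_\sigma$. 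I checked this bookkeeping and it is right; in particular your use of the constancy of $Y_p$ on a facet (which the paper itself records just after the proposition) correctly rules out an integer in the interval $\{(\theta,r):\theta\in\sigma\}$ in the non-constant case. The one caution is the circularity you yourself flag: the identification of $\cG^u_\sigma\otimes_A k$ with the full unipotent radical of the special fibre, and the fact that $Y_\sigma$ is a closed sub-root-system, are essentially the content of the very reference \cite[4.6.12]{bt} that the paper leans on. If you are permitted to quote that, your weight computation is a pleasant consistency check rather than an independent proof; if you are not, you would need to close the loop, e.g.\ by showing directly that the subgroup generated by the $u_r(t^{e_r}A)$ with $e_r=m_r+1$ absent reduces to a normal unipotent subgroup with reductive quotient, which again comes down to $Y_\sigma$ being a root system. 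Either way, your write-up is more informative than the paper's, and the elementary middle step is sound.
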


Let us introduce some notations for a facet $\sigma$ of $\mathbf{a}$ and a root $r$ of $G$. By the definition of a facet (\ref{facetdefn}), for any two points $p$ and $p'$ in $\sigma$, it follows that 
\begin{equation} Y_p = Y_{p'}.
\end{equation} Therefore for a root $r \in R$, if $(p,r) \notin \ZZ$ for some $p \in \sigma$, then it holds for all other  points $p' \in \sigma$ also and $|(p,r)-(p',r)| < 1$. Since the facets are convex subsets of the Euclidean space, so by the intermediate value  theorem, this observation allows us to write expressions of the form $(\sigma,r) \notin \ZZ$, $\lfloor (\sigma,r) \rfloor$ and $\lceil (\sigma,r) \rceil$ by choosing an arbitrary point $p \in \sigma$.  For an integer $n$, a facet $\sigma$  and an affine root $\alpha$, we will write expressions of the form $n \leq \alpha(\sigma) $ (or $\alpha(\sigma) <n$) when $n \leq \alpha(p)$  ( or $\alpha(p) <n$) for all  points $p \in \sigma$.

\begin{Cor} \label{borelpar} Let $\bs$ and $\bb$ be  facets of  $\mathbf{a}$. Suppose that $\bs$ is in the closure of $\bb$. Let $G_{\bs,\bb}  \subset Y_{\bs}$ be defined by
\begin{equation} \label{Gsb}
G_{\bs,\bb}= \{ \alpha \in Y_{\bs} | 0 \leq \alpha(\bb) \}.
\end{equation}
The group $\cG_{\bb}/\cG^u_{\bs}$ is the parabolic  subgroup of $\cG_{\bs}/\cG^u_{\bs}$ given by $G_{\bs,\bb}$.
\end{Cor}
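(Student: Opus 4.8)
The plan is to realize the quotient map $\cG_\bs \to \cG_\bs/\cG^u_\bs =: \ol G_\bs$ --- whose target, by Proposition \ref{raghu}, is reductive with root system $Y_\bs$, maximal torus $\ol T$ the image of $T$, and root group $U_{\alpha_r}$ (for $\alpha_r := r - (\bs,r) \in Y_\bs$) the image of the leading term of $u_r$ --- and then to establish two things: that $\cG^u_\bs \subseteq \cG_\bb \subseteq \cG_\bs$, and that the image of $\cG_\bb$ under this quotient is exactly the standard parabolic attached to the subset $G_{\bs,\bb}$ of $Y_\bs$. As a preliminary I would check that $G_{\bs,\bb}$ really is a parabolic subset of roots: it is closed under addition (since $\alpha(\bb),\beta(\bb) \geq 0$ forces $(\alpha+\beta)(\bb)\geq 0$) and satisfies $G_{\bs,\bb} \cup (-G_{\bs,\bb}) = Y_\bs$ (as $\alpha(\bb)$ is a real number of definite sign), with $G_{\bs,\bb} \cap (-G_{\bs,\bb}) = \{\alpha \in Y_\bs : \alpha(\bb)=0\} = Y_\bb$. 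Thus the associated parabolic has Levi with root system $Y_\bb$, a reassuring consistency check since $Y_\bb$ is the root system of $\cG_\bb/\cG^u_\bb$.

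The technical core is a sign-and-floor comparison on the apartment. Since $\bs \subseteq \ol\bb$, every wall containing $\bb$ contains $\bs$, so $Y_\bb \subseteq Y_\bs$; and since $\bs,\bb$ are facets of the single alcove $\mathbf{a}$, no affine wall meets the interior of $\mathbf{a}$, so every affine root has constant sign on $\ol{\mathbf{a}}$. I would use this to compare the exponents $m_r(\bb) = -\lfloor(\bb,r)\rfloor$ and $m_r(\bs) = -\lfloor(\bs,r)\rfloor$ entering (\ref{defnpara}) with the radical exponents $1-\lceil(\cdot,r)\rceil$ in (\ref{defnparau}). For $r \notin Y_\bs$ one gets $(\bb,r) \notin \ZZ$ (else $r \in Y_\bb \subseteq Y_\bs$) and the two values share a floor, whence $m_r(\bb)=m_r(\bs)$ and the entire $r$-root group already lies in $\cG^u_\bs$. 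For $r \in Y_\bs$ (so $(\bs,r)\in\ZZ$ and $\bs$ lies on the wall $\alpha_r=0$) the constant sign of $\alpha_r$ on $\bb$ gives $m_r(\bb) = -(\bs,r)$ when $\alpha_r(\bb)\geq 0$ and $m_r(\bb) = 1-(\bs,r)$ when $\alpha_r(\bb) < 0$. These inequalities yield both inclusions $\cG^u_\bs \subseteq \cG_\bb \subseteq \cG_\bs$ simultaneously.

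With this bookkeeping in hand I would read off the image of $\cG_\bb$ generator by generator under reduction modulo $\cG^u_\bs$: $T$ maps onto $\ol T$; for $r \notin Y_\bs$ the $r$-root part dies; and for $r \in Y_\bs$ the $r$-root part maps onto the full root group $U_{\alpha_r}$ exactly when $\alpha_r(\bb)\geq 0$, that is when $\alpha_r \in G_{\bs,\bb}$, and to the identity otherwise. Hence the image is $\langle \ol T,\, U_\alpha : \alpha \in G_{\bs,\bb}\rangle$, the parabolic attached to $G_{\bs,\bb}$; and since $\cG^u_\bs \subseteq \cG_\bb$, the kernel of $\cG_\bb \to \ol G_\bs$ is precisely $\cG_\bb \cap \cG^u_\bs = \cG^u_\bs$, identifying $\cG_\bb/\cG^u_\bs$ with that parabolic.

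I expect the main obstacle to be twofold. First, making the generator-by-generator argument rigorous at the scheme level rather than on $k$-points: I would invoke the explicit description of the special fibers and their root-group filtrations from \cite[4.6.12]{bt} to know that reduction modulo $\cG^u_\bs$ really carries $u_r(t^{m_r}A)$ onto $U_{\alpha_r}$, and that these root groups together with $\ol T$ generate the relevant subgroups as closed subschemes, so that the equality of images is an equality of group schemes and not merely of point sets. Second, the clean handling of the boundary case $\alpha_r(\bb)=0$ (roots $r \in Y_\bb$), which must land in the Levi of the parabolic rather than be mistakenly discarded; the constant-sign argument on $\ol{\mathbf{a}}$ together with $Y_\bb \subseteq Y_\bs$ is exactly what keeps this case under control.
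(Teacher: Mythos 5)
Your proposal is correct and follows essentially the same route as the paper: both arguments work generator by generator from the explicit presentations (\ref{defnpara}) and (\ref{defnparau}), compare the exponents $-\lfloor(\bb,r)\rfloor$ and $1-\lceil(\bs,r)\rceil$, show that roots with $(\bs,r)\notin\ZZ$ fall into $\cG^u_{\bs}$, and identify the surviving roots with $G_{\bs,\bb}$. The only cosmetic difference is that the paper proves the key non-integrality step as a separate lemma via the intermediate value theorem and barycentric coordinates, whereas you justify the same fact by the constant-sign property of affine roots on $\ol{\mathbf{a}}$; your extra verification that $G_{\bs,\bb}$ is a parabolic subset of $Y_{\bs}$ is a welcome addition.
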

\begin{proof} Let $R$ be an $A$-algebra, where $A=k[[t]]$. Here below it will be convenient to write $A \otimes R$ (short form for $A \otimes_A R$) instead of $R$.  By (\ref{defnpara}) and (\ref{defnparau}), we have $$\cP_{\mathbf{s}}^u \subset \cP_{\mathbf{b}}^u \subset \cP_{\mathbf{b}} \subset \cP_{\mathbf{s}}.$$ Let us consider $\cP_{\mathbf{s}}^u \subset \cP_{\mathbf{b}}$. Further, we have $u_r(t^{- \lfloor (\bb,r) \rfloor} A \otimes R) \subset \cG_{\bb}(A \otimes R)$ and $u_r(t^{1- \lceil (\bs,r) \rceil} A \otimes R) \subset \cG^u_{\bs}(A \otimes R)$. So $1- \lceil (\bs,r) \rceil \geq - \lfloor (\bb,r) \rfloor$. Thus the quotient $\cG_{\bb} (A \otimes R)/\cG^u_{\bs}(A \otimes R)$ is generated by roots $r$ such that 
\begin{equation} \label{floorceiling}
\lfloor (\bb,r) \rfloor = \lceil (\bs,r) \rceil.
\end{equation} 
\begin{lem} If $(\bs,r) \notin \ZZ$, then (\ref{floorceiling}) cannot hold.
\end{lem} 
\begin{proof}  If $(\bb,r) \in \ZZ$, then $(\bs,r)=(\bb,r)$ since $\bs$ is in the closure of $\bb$ and they are connected. Thus $(\bb,r) \in \ZZ \implies (\bs,r) \in \ZZ$.  Let us pick points $s \in \bs$ and $b \in \bb$  respectively. So if $(s,r) \notin \ZZ$ then $(b,r) \notin \ZZ$ and thus $\lfloor (b,r) \rfloor < (b,r)$ while $(s,r) < \lceil (s,r) \rceil$. Thus for (\ref{floorceiling}) to hold, we must have $(s,r) < (b,r)$ and there is an integer namely $\lfloor (b,r) \rfloor = \lceil (s,r) \rceil$ in between. By the intermediate value theorem, the straight line segment $L$ joining $s$ and $b$ must attain this integral value at some point $p \in L$. Now $p \neq s$ (or $b$) because $(s,r) \notin \ZZ$ (resp $(b,r) \notin \ZZ$). We claim that $p \in \bb$. This follows by observing 
\begin{enumerate}
\item  the zero-dimensional facets bordering $\bs$ belong to the set of zero dimensional facets bordering $\bb$.
\item Thus the barycentric coordinates of $s$ and $b$ are strictly positive. 
\item  $p$ is a strict convex combination of $s$ and $b$.   
\end{enumerate}
By these observations, it follows that $p$ admits striclty positive barycentric coordinates on the set of zero-dimensional facets bordering $\bb$. This means that $p \in \bb$. This contradicts $(\bb,r) \notin \ZZ$. 
\end{proof}
By the above lemma therefore any $r$ satisfying (\ref{floorceiling}) must be the vector part of some affine root $\alpha$ belonging to $Y_{\bs}$. For $\alpha \in Y_{\bs}$, if $\alpha(\bb) < 0$ then (\ref{floorceiling}) cannot hold. Summarizing the above observations, the roots generating $\cG_{\bb} (A \otimes R)/\cG^u_{\bs}(A \otimes R)$ correspond to the set $G_{\bs,\bb}=\{ \alpha \in Y_{\bs} | 0 \leq \alpha(\bb)\}$. Now onwards we view $Y_{\bs}$ as a root system. For $\alpha \in G_{\bb,\bs}$, since $\alpha(\bs)=0$, so if $\alpha(\bb)=\alpha(\bs)$, then $-\alpha$ belongs to $G_{\bb,\bs}$ too. Thus the roots corresponding to $G_{\bb,\bs}$ generate a parabolic subgroup of $\cG_{\bs}/\cG^u_{\bs}$ corresponding to $\alpha \in Y_{\bs}$ where the inequality is strict.
\end{proof}



\subsection{Loop groups and their flag varieties} \label{lgpflv}
Let $k$ be a field. Let $\cG$ be a flat affine group scheme of finite type over $k[[t]]$.
For a $k$-algebra $R$ let $R[[t]]$ denote the formal power series ring and $R((t))=R[[t]][t^{-1}]$ the field of Laurent polynomials with coefficients in $R$. Recall that the loop group $L\cG$ of $\cG$ represents the functor mapping $R$ to $\cG(R((t)))$. It is represented by an ind-affine scheme. Recall that the jet group $L^+\cG$ represents the functor mapping $R$ to $\cG(R[[t]])$. It is represented by a closed subscheme of $L \cG$ which is affine. The quotient of fpqc-sheaves $\cF l=L \cG/L^+ \cG$ is the associated flag variety. For a facet $\sigma$,
\begin{equation} \label{flagv}
\cF l_\sigma= L\cG_{\sigma}/L^+\cG_{\sigma}
\end{equation}
is the flag variety associated to $\sigma$. It
is represented by an ind-scheme that is ind-projective over $k$.  It represents the functor that to $R$ associates the set of pairs $(\cE,\theta)$ where $\cE$ is a $\cG_{\sigma}$-torsors on $Spec R[[t]]$ together with a trivialization given by $\theta$ over $Spec R((t))$.  We recall

\begin{thm} \label{etlocsec}  \cite[Theorem 1.4]{pradv} Let $R$ be a strictly henselian ring over $k$. for any point $\Spec(R) \ra \cF l_{\sigma}$, we have $\Spec(R) \times_{\Spec(k)} L^+\cG_{\sigma} \simeq \Spec(R) \times_{\cF l_{\sigma}} L \cG_{\sigma}$. 
\end{thm}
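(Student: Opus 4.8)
The plan is to recast the statement as the triviality of a torsor. By the definition of the flag variety as the fpqc quotient $\cF l_\sigma=L\cG_\sigma/L^+\cG_\sigma$ in (\ref{flagv}), the projection $\pi\colon L\cG_\sigma \ra \cF l_\sigma$ is an $L^+\cG_\sigma$-torsor for the fpqc topology. The asserted isomorphism $\Spec(R)\times_{\Spec(k)}L^+\cG_\sigma \simeq \Spec(R)\times_{\cF l_\sigma}L\cG_\sigma$ is then precisely the assertion that the pulled-back torsor on $\Spec(R)$ is the trivial one, and since a torsor is trivial exactly when it admits a section, it suffices to lift the given morphism $\Spec(R)\ra \cF l_\sigma$ to a morphism $\Spec(R)\ra L\cG_\sigma$. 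As $\Spec(R)$ is quasi-compact, this morphism factors through a finite-type projective Schubert subscheme of the ind-scheme $\cF l_\sigma$, so one may work over a bounded piece on which $\pi$ restricts to a torsor of finite type modulo a pro-unipotent part.

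The essential input is smoothness. Since $\cG_\sigma$ is a parahoric group scheme it is smooth and affine over $A=k[[t]]$, so writing $L^+\cG_\sigma=\varprojlim_n L^+_n\cG_\sigma$ with $L^+_n\cG_\sigma$ the jet group representing $R'\mapsto \cG_\sigma(R'[t]/t^{n+1})$, each $L^+_n\cG_\sigma$ is a smooth affine $k$-group and each transition $L^+_{n+1}\cG_\sigma\ra L^+_n\cG_\sigma$ is smooth and surjective with kernel a vector group built from $\mathrm{Lie}(\cG_\sigma)$, the kernel being additive exactly because $\cG_\sigma$ is smooth. Hence $\pi$ is (formally) smooth, so the torsor is trivial in the \'etale topology; this is the step where fpqc-local triviality gets upgraded to \'etale-local triviality and is the heart of the matter. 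Because $R$ is strictly henselian local, such \'etale covers split, yielding a section already at the bottom level $L^+_0\cG_\sigma=\cG_\sigma\bmod t$.

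Finally I would lift this section up the tower. The obstruction to promoting a section through $L^+_{n+1}\cG_\sigma\ra L^+_n\cG_\sigma$ lies in $H^1\bigl(\Spec(R),\,\mathrm{Lie}(\cG_\sigma)\otimes_A t^{n+1}A/t^{n+2}A\bigr)$, a coherent-sheaf cohomology group on the affine local scheme $\Spec(R)$, hence zero, and the compatible system of lifts assembles into the required map $\Spec(R)\ra L\cG_\sigma$. The main obstacle I anticipate is twofold: making the pro/ind formalism precise, so that a coherent family of sections through the smooth tower $\{L^+_n\cG_\sigma\}$ defines an honest morphism into the ind-scheme $L\cG_\sigma$ landing in the correct stratum; and cleanly invoking smoothness of the parahoric $\cG_\sigma$ to pass from the fpqc-local triviality built into the quotient construction to genuine \'etale-local triviality, which is what strict henselianness then converts into an actual section. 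Everything else is formal, resting on Hensel's lemma and the vanishing of coherent cohomology on a local scheme.
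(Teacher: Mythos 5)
The paper does not actually prove this statement---it is quoted directly from \cite[Theorem 1.4]{pradv}---and your argument is essentially the standard proof of that cited result: recast the claim as triviality of the pulled-back $L^+\cG_{\sigma}$-torsor, produce a section at a finite jet level over the strictly henselian local base using smoothness of the parahoric group scheme $\cG_{\sigma}$ over $k[[t]]$, and climb the tower $L^{+}_{n+1}\cG_{\sigma}\ra L^{+}_{n}\cG_{\sigma}$ using the vanishing of $H^1$ of the vector-group kernels on the affine scheme $\Spec(R)$. One clause should be struck or demoted, namely ``hence $\pi$ is (formally) smooth, so the torsor is trivial in the \'etale topology'': \'etale-local triviality is not a formal consequence of pro-smoothness of a map with infinite-dimensional fibres, but is precisely what your subsequent level-by-level analysis (section at level $0$, then unobstructed lifting) establishes, so it should be stated as the conclusion of that analysis rather than as an intermediate deduction feeding into it.
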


Since $k$ is algebraically closed for us, so by the above theorem, the natural map $L \cG_{\sigma} \ra \cF l_{\sigma}$ is an \'etale local trivial fibration with fibers $L^+ \cG_{\sigma}$.
 
\section{Fundamental group of  the moduli stack}
\subsection{The Bruhat-Tits group scheme} \label{gpsch} Let $\cR \subset X$ be a non-empty finite set of closed points. For each $x \in \cR$, we choose a facet $\sigma_x \subset \cA_T$. Let $\cG_{\sigma_x} \ra \Spec(\hat{\cO_x})$ be the parahoric group scheme corresponding to $\sigma_x$. Let $X^\circ = X \setminus \cR$. For $x \in X$, let $\mathbb{D}_x=\Spec(\hat{\cO_x})$, let $K_x$ be the quotient field of $\hat{\cO_x}$ and let $\mathbb{D}^\circ_x=\Spec(K_x)$. {\it In this paper,  by a Bruhat-Tits group scheme $\cG \ra X$ we shall mean that  $\cG$ restricted to ${X^\circ}$ is isomorphic to $ X^\circ \times G$, and for any closed point $x \in X$,  $\cG$ restricted to $\mathbb{D}_x$ is a parahoric group scheme $\cG_{\sigma_x}$ such that the gluing functions take values in $Mor(\mathbb{D}^\circ_x,G)=G(K_x)$.} This is also the setup of \cite[Defn 5.2.1]{bs}. Let us remark the setup of  \cite{heinloth} and \cite{zhu} are the same. They consider more general group schemes which may not be split over the function field of $X$. The global conditions over $\cG$ demanded in \cite{heinloth} are satisfied in our case (cf. \cite[Introduction]{heinloth}). By \cite[Lemma 5]{heinloth} it is always possible to glue $X^\circ \times G$ with $\{ \cG_x | x \in \cR\}$ along the fpqc cover $\{X^\circ \} \cup \{ \Spec(\hat{\cO_x}) | x \in \cR \}$ of $X$ to get a group scheme $\cG \ra X$. 

The group scheme $\cG \ra X$ depends on the gluing data. For instance, if $E \ra X$ is a principal $G$-bundle, then the adjoint group scheme $Ad(E) \ra X$ is such a group scheme. Its restriction to $X^\circ$ and $\Spec(\hat{\cO_x})$ is always the trivial group scheme, while it may or may not be trivial over $X$. Similarly, let $x_0 \in X$ be a closed point, and let $\cM$ denote the stack of vector bundles of rank $r$ on $X$ and determinant $\cO_X(-d x_0)$ where $0 \leq d <r$. Then for any $V \in \cM$, the adjoint group scheme $Aut(V) \ra X$ is obtained by gluing $X^\circ \times G$ with $\cG_\sigma$ where $\sigma$ is the unique vertex of the alcove $\mathbf{a}$ where only the affine simple root $\alpha_d$ does not vanish.

\subsection{Parahoric torsors} \label{pt}
Let $\cG \ra X$ be a group scheme as in \S \ref{gpsch}. A {\it quasi-parahoric} torsor $\cE$ is a $\cG$--torsor on $X$.  This means that $\cE \times_X \cE \simeq \cE \times_X \cG$ and there is an action map $a: \cE \times_X \cG \ra \cE$ which satisfy the usual axioms for principal $G$-bundles. A {\it parahoric torsor} is a pair $(\cE\, , {\boldsymbol\theta})$ consisting of the pair of a quasi-parahoric torsor and  weights ${\boldsymbol\theta}\,=\, \{\theta_x| x \in \cR \} \in (Y(T) \otimes \RR)^m$ such that $\theta_x$ lies in the facet $\sigma_x$ (cf \S \ref{gpsch}) and $m=|\cR|$.

\subsection{Stack}
 
 Let $\cM_X(\cG)$ denote the moduli stack of $\cG$-torsors on $X$ whose $W$-points, for $W$ a $k$-scheme,  corresponds to the groupoid of $\cG$-torsors $\cE \ra X \times W$. By \cite{heinloth} it is a smooth algebraic stack. It is also irreducible and connected.  
  
We now recall the uniformization theorem for $\cG$-torsors \cite{heinloth}. Set $X^\circ:=X \setminus \cR$. Consider the presheaf of sets on the category of $k$-algebras which to a $k$-algebra $R$ associates
$$Mor(Spec(R) \times_k X^\circ,G).$$ 
Let $L_{X^\circ}(G)$ denote associated sheaf of sets. It is represented by an ind-scheme (cf \cite[Lemma 20]{heinloth}). Set $\cQ= \prod_{x \in \cR} \cF l_{\sigma_x}$. A $R$-point of $\cQ_G$ classifies $\cG$-torsors on $X \times \Spec(R)$ together with a section on $X^\circ \times \Spec(R)$. The map $\cQ \ra \cM_X(\cG)$ forgets the section and the ind-scheme $L_{X^\circ}(G)$ acts on $\cQ$ by changing the section.  By the Uniformization theorem we have an isomorphism of stacks
\begin{eqnarray} \label{unif}
\cQ_G/L_{X^\circ}(G) = \cM_X(\cG) \\
\cQ \times L_{X^\circ}(G) \ra \cQ \times_{\cM_X(\cG)} \cQ.
\end{eqnarray}

\subsubsection{ Construction of $\cG^{\mathbf{a}} \ra \cG$} For each facet $\sigma_x$, let us choose an alcove $\mathbf{a}^x$
such that $\sigma_x$ lies in its closure. Recall that we have assumed that the gluing functions $f_x$ lie in $Mor(\mathbb{D}_x^\circ,G)$. Using $\{f_x\}_{x \in \cR}$ we glue $X^\circ \times G$ with $\cG_{\mathbf{a^x}} \ra \mathbb{D}_x$  to get a global group scheme $\cG^{\mathbf{a}} \ra X$. So the natural maps $\cG_{\mathbf{a^x}} \hra \cG_\sigma$ over $\mathbb{D}_x$ for $x \in \cR$ extends to a morphism $\cG^{\mathbf{a}} \hra \cG$ of Bruhat-Tits group schemes over $X$. In particular, we have a morphism of stacks $\cM_X(\cG^{\mathbf{a}}) \ra \cM_X(\cG)$. The group scheme $\cG^{\mathbf{a}}$ depends on choices of global functions $\{f_x \}_{x \in \cR}$ and alcoves $\mathbf{a}^x$, but in this paper we only need its existence.

\subsubsection{Construction of $\cM_X(\cG^{\mathbf{a}}) \ra \cM_X(G)$} Let us construct a morphism of algebraic stacks $\cM_X(\cG^{\mathbf{a}}) \ra \cM_X(G)$. For each $x \in \cR$, let $w_x$ be an element in the affine Weyl group $W_a$ which maps $\mathbf{a}_0$ (cf \S \ref{alcove}) to $\mathbf{a}^x$ . Set $v_x = w_x v_0$. Let $N$ denote the normalizer of $T$. We choose an element $n_x \in N(K)$ which maps to $w_x$. We will view $n_x$ as an element of $G(K)$.

 Let $E \ra X$ be a principal $G$-bundle obtained by gluing the trivial bundles on $X^\circ$ and $\{\mathbb{D}_x\}_{x \in \cR}$ by $\{n_x f_x\}_{x \in \cR}$. Let $Ad(E) \ra X$ denote the adjoint group scheme of $E$. Then $Ad(E)$ is obtained by gluing the constant group scheme $X^\circ \times G$ with $\cG_{v_0}$ by $\{n_x f_x\}$. Thus $Ad(E)$ is obtained by gluing $X^\circ \times G$ with $\cG_{v_x}$ via $\{f_x\}$.
Since the group scheme $\cG^\mathbf{a}$ is obtained by gluing $X^\circ \times G$ and $\mathcal{G}_{\mathbf{a}}$ via $\{f_x\}$, and we have natural morphisms $\cG_{\mathbf{a}^x} \hra \cG_{v_x}$ so we obtain a natural map of group schemes $\cG^{\mathbf{a}} \ra \Aut(E)$. This also furnishes \begin{equation} \label{phif} \phi_f: \cM_X(\cG^{\mathbf{a}}) \ra \cM_X(Ad(E)).
\end{equation} 
 The principal bundle $E$ is a left $Ad(E)$-torsor and right $G$-torsor on $X$. We have an isomorphism of stacks 
\begin{equation} \label{mapofstacksbasepoint} \mu_E: \cM_X(Ad(E)) \ra \cM_X(G)
\end{equation} which sends a right $Ad(E)$-torsor $\cF$ to the principal $G$-bundle $\cF \times_{Ad(E)} E$. Here $\cF \times_{Ad(E)} E$ denotes the space where   for local sections $f, g, e$ of $\cF$, $Ad(E)$ and $E$ respectively we identify $(e g, g^{-1} f)$ with $(e,f)$. Its inverse is given by sending $F$ to $F \times_G E^{op}$. Here $E^{op}$ has the same underlying space as $E$ but for local section $e$ of $E^{op}$, $e.g$ is defined to be $e g^{-1}$ after viewing it as a local section of $E$. Thus we obtain a morphism of stacks:
\begin{equation} \label{mapfromiwahoritoG}
\mu_E \circ \phi_f: \cM_X(\cG^{\mathbf{a}}) \ra \cM_X(Ad(E)) \ra \cM_X(G).
\end{equation}

\begin{prop}\label{isafibration} The morphism $\pi: \cM_X(\cG^{\mathbf{a}}) \ra \cM_X(\cG)$ satisfies Axiom A (cf \S \ref{fibration}). Hence it is a fibration. For a facet $\sigma$  of the alcove $\mathbf{a}$, let $G^\sigma$ denote the reductive quotient of $\cG_\sigma \otimes k$ and $F^\sigma$ its full flag variety. The fibers of $\pi$ are isomorphic to $\prod_{x \in \cR} F^{\sigma_x}$.
\end{prop}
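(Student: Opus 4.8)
The plan is to split the statement into two independent parts: a local computation identifying the fibres as $\prod_{x\in\cR}F^{\sigma_x}$, which rests entirely on Corollary \ref{borelpar}, and the verification of Axiom A, which I expect to follow formally once the fibres are understood. First I would pass to the uniformization (\ref{unif}): writing $\cM_X(\cG)=\cQ/L_{X^\circ}(G)$ and $\cM_X(\cG^{\mathbf a})=\cQ^{\mathbf a}/L_{X^\circ}(G)$ with $\cQ=\prod_{x\in\cR}\cF l_{\sigma_x}$ and $\cQ^{\mathbf a}=\prod_{x\in\cR}\cF l_{\mathbf a^x}$, the map $\pi$ is induced by the product of the natural maps $\cF l_{\mathbf a^x}\ra\cF l_{\sigma_x}$ coming from $\cG_{\mathbf a^x}\hra\cG_{\sigma_x}$. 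Since $\cQ\ra\cM_X(\cG)$ is an $L_{X^\circ}(G)$-torsor, the fibres of $\pi$ coincide with those of $\cQ^{\mathbf a}\ra\cQ$, so it suffices to analyse each $\cF l_{\mathbf a^x}\ra\cF l_{\sigma_x}$. As $\cG_{\mathbf a^x}$ and $\cG_{\sigma_x}$ agree over $K_x$ we have $L\cG_{\mathbf a^x}=L\cG_{\sigma_x}$, so this map is $L\cG_{\sigma_x}/L^+\cG_{\mathbf a^x}\ra L\cG_{\sigma_x}/L^+\cG_{\sigma_x}$, with fibre the jet-group quotient $L^+\cG_{\sigma_x}/L^+\cG_{\mathbf a^x}$.

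Next I would identify this quotient with $F^{\sigma_x}$. Let $\ev_0\colon L^+\cG_{\sigma_x}\ra\cG_{\sigma_x}\otimes k$ be reduction modulo $t$; it is surjective because $\cG_{\sigma_x}$ is smooth, and composing with $\cG_{\sigma_x}\otimes k\ra G^{\sigma_x}$ gives a surjection $\rho\colon L^+\cG_{\sigma_x}\ra G^{\sigma_x}$. Its kernel is the pro-unipotent radical $\cP^u_{\sigma_x}=\ev_0^{-1}(\cG^u_{\sigma_x}\otimes k)$, and the chain $\cP^u_{\sigma_x}\subset\cP_{\mathbf a^x}$ established in the proof of Corollary \ref{borelpar} gives $\ker\rho\subset L^+\cG_{\mathbf a^x}$. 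On the other hand $\ev_0(L^+\cG_{\mathbf a^x})=\cG_{\mathbf a^x}\otimes k$, whose image in $G^{\sigma_x}$ is, by Corollary \ref{borelpar} with $\bs=\sigma_x$ and $\bb=\mathbf a^x$, the Borel subgroup $B^{\sigma_x}=\cG_{\mathbf a^x}/\cG^u_{\sigma_x}$ (the parabolic attached to an alcove is a Borel, since each affine root of $Y_{\sigma_x}$ is strictly positive or strictly negative on the open alcove $\mathbf a^x$). Combining the two facts yields $L^+\cG_{\mathbf a^x}=\rho^{-1}(B^{\sigma_x})$, so $\rho$ descends to an isomorphism
\begin{equation*}
L^+\cG_{\sigma_x}/L^+\cG_{\mathbf a^x}\;\simeq\;G^{\sigma_x}/B^{\sigma_x}\;=\;F^{\sigma_x}.
\end{equation*}
Taking the product over $x\in\cR$ then shows that the fibres of $\pi$ are isomorphic to $\prod_{x\in\cR}F^{\sigma_x}$.

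For the fibration statement I would argue as follows. By Theorem \ref{etlocsec} the projection $L\cG_{\sigma_x}\ra\cF l_{\sigma_x}$ is an \'etale-locally trivial $L^+\cG_{\sigma_x}$-fibration, so $\cF l_{\mathbf a^x}=L\cG_{\sigma_x}\times^{L^+\cG_{\sigma_x}}\bigl(L^+\cG_{\sigma_x}/L^+\cG_{\mathbf a^x}\bigr)\ra\cF l_{\sigma_x}$ is the associated bundle with fibre $F^{\sigma_x}$; in particular it is representable, proper and smooth with geometrically connected fibres, and the same holds for the product $\cQ^{\mathbf a}\ra\cQ$. By the criterion recalled in \S\ref{fibration}, a proper flat morphism with geometrically connected fibres is a fibration of algebraic spaces, so $\cQ^{\mathbf a}\ra\cQ$ is a fibration. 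Since $\cQ\ra\cM_X(\cG)$ is an fppf cover and being a fibration is local on the target for the fppf topology (\S\ref{fibration}), it follows that $\pi$ is a fibration, and in particular satisfies Axiom A.

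The hard part will be the second paragraph: the cancellation of all infinite-dimensional data in the jet-group quotient $L^+\cG_{\sigma_x}/L^+\cG_{\mathbf a^x}$, so that only the finite-dimensional flag variety $F^{\sigma_x}$ remains. This is precisely the containment $\ker\rho=\cP^u_{\sigma_x}\subset L^+\cG_{\mathbf a^x}$ together with the computation $\rho(L^+\cG_{\mathbf a^x})=B^{\sigma_x}$ supplied by Corollary \ref{borelpar}; once these are in hand, the fibres are (simply connected) flag varieties and the fibration property, hence Axiom A, is a formal consequence of the criterion in \S\ref{fibration}.
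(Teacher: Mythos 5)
Your proposal is correct and follows essentially the same route as the paper: reduce via the uniformization diagram to the map of affine flag varieties $\prod_x \cF l_{\mathbf a^x}\ra\prod_x\cF l_{\sigma_x}$, identify the fibres as $\prod_x F^{\sigma_x}$ using Corollary \ref{borelpar}, invoke Theorem \ref{etlocsec} for \'etale local triviality, and conclude Axiom A from the proper-flat-with-geometrically-connected-fibres criterion together with locality of fibrations on the target. The only difference is that you spell out in detail (via the evaluation map $\rho$, the containment $\cP^u_{\sigma_x}\subset L^+\cG_{\mathbf a^x}$, and the identification $\rho(L^+\cG_{\mathbf a^x})=B^{\sigma_x}$) the fibre computation that the paper leaves as a one-line appeal to Corollary \ref{borelpar}.
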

\begin{proof}  Let $\cF l_{\sigma}$ denote the affine flag variety of the facet $\sigma$ (cf \ref{flagv}). By the uniformization theorem \cite{heinloth}, the following diagram is cartesian:
\begin{equation} \label{cartesiandiagram}
\xymatrix{
\prod_{x \in \cR} \cF l_\mathbf{a} \ar[r]_p \ar[d]^{q_\mathbf{a}} & \prod_{x \in \cR} \cF l_{\sigma_x} \ar[d]^{q_{}} \\
\cM_X(\cG^\mathbf{a}) \ar[r]^\pi & \cM_X(\cG)
}
\end{equation} 
 We first consider the morphism $p$.  By Corollary \ref{borelpar}, the fibers of $p$ are isomorphic to $\prod_{x \in \cR} F^{\sigma_x}$. Since our base field $k$ is algebraically closed, so by Theorem \ref{etlocsec}, the natural quotient morphism $LG \ra \cF l_{\sigma}$ is \'etale locally trivial. So $p$ is also \'etale locally trivial. 

Now we consider the morphism $\pi$ and we will deduce the result for $\pi$ through $p$. By \cite[Thm 4]{heinloth}, for any $S$-family $\cP \in \cM_X(\cG)(S)$, there exists an \'etale covering $S' \ra S$ such that $\cP|_{X^\circ \times S'}$ is trivial.  So the morphism $q$ is \'etale locally trivial. 
By the cartesian diagram (\ref{cartesiandiagram}) it follows that $\pi$ is an \'etale locally trivial morphism of fiber type $\prod_{x \in \cR} F^{\sigma_x}$. Since $\pi$ base-changes to $p$, so the morphism $\pi$ is representable. These stacks are algebraic \cite{heinloth} and locally noetherian. So since $q$ is \'etale locally trivial, so $\pi$ is also a fibration in the sense of \cite{noohi} (cf \S \ref{noohires}). More precisely, $\pi$ is proper and flat and after base-change to any algebraic space the morphism $\pi$ has geometrically connected fibers isomorphic to $\prod_{x \in \cR} F^{\sigma_x}$. So it satisfies Axiom A. Hence it is a fibration in the sense of \cite{noohi}.
\end{proof}

\begin{prop} \label{pi1stack} Let $x: \Spec(k) \ra \cM_X(\cG)$ be any point. The \'etale fundamental group of $\cM_X(\cG)$ is isomorphic to that of $\cM_X(G)$. 
\end{prop}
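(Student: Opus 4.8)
The plan is to use the auxiliary stack $\cM_X(\cG^{\mathbf{a}})$ as a bridge. I will show that both the fibration $\pi:\cM_X(\cG^{\mathbf{a}}) \ra \cM_X(\cG)$ of Proposition \ref{isafibration} and the morphism $\mu_E \circ \phi_f:\cM_X(\cG^{\mathbf{a}}) \ra \cM_X(G)$ of (\ref{mapfromiwahoritoG}) induce isomorphisms on $\pi_1$. Chaining these then yields $\pi_1(\cM_X(\cG)) \cong \pi_1(\cM_X(\cG^{\mathbf{a}})) \cong \pi_1(\cM_X(G))$, which is the assertion.

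The key geometric input is that in both cases the fibers are products of full flag varieties $G^\sigma/B^\sigma$ of the reductive quotients $G^\sigma$. Such a flag variety is connected and, by its Bruhat cell decomposition into affine spaces, has trivial \'etale fundamental group; hence any finite product of them is connected and simply-connected. For $\pi$, Proposition \ref{isafibration} already identifies the fiber as $\prod_{x \in \cR} F^{\sigma_x}$ and shows that $\pi$ is a fibration in the sense of \S \ref{fibration}. Feeding $\pi_1(\text{fiber})=\{*\}$ and $\pi_0(\text{fiber})=\{*\}$ into the exact sequence (\ref{fibseq}) forces the map $\pi_1(\cM_X(\cG^{\mathbf{a}})) \ra \pi_1(\cM_X(\cG))$ to be injective (its kernel is the image of $\pi_1$ of the fiber, which is trivial) and surjective (its image is the kernel of the map to $\pi_0$ of the fiber, which is all of the target). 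Thus $\pi_1(\pi)$ is an isomorphism.

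For the second leg I first replace $\mu_E \circ \phi_f$ by $\phi_f:\cM_X(\cG^{\mathbf{a}}) \ra \cM_X(Ad(E))$, since $\mu_E$ is an isomorphism of stacks (\ref{mapofstacksbasepoint}) and hence an isomorphism on $\pi_1$. Now $\phi_f$ is of exactly the same type as $\pi$: it arises from the group-scheme inclusion $\cG^{\mathbf{a}} \hra Ad(E)$, which over each $\mathbb{D}_x$ is $\cG_{\mathbf{a}^x} \hra \cG_{v_x}$, with the vertex $v_x=w_x v_0$ lying in the closure of the alcove $\mathbf{a}^x=w_x\mathbf{a}_0$ because $v_0 \in \overline{\mathbf{a}_0}$. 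Running the argument of Proposition \ref{isafibration} verbatim — the uniformization theorem \cite{heinloth} supplies a cartesian square relating $\phi_f$ to the proper \'etale-locally-trivial map $\prod_x \cF l_{\mathbf{a}^x} \ra \prod_x \cF l_{v_x}$ — and applying Corollary \ref{borelpar} with $\bs=v_x$ and $\bb=\mathbf{a}^x$ identifies the fibers of $\phi_f$ with $\prod_{x \in \cR} F^{v_x}$, again a product of full flag varieties. Hence $\phi_f$ is a fibration with connected, simply-connected fiber, and the same collapse of (\ref{fibseq}) gives that $\pi_1(\phi_f)$ is an isomorphism. Composing $\pi_1(\pi)^{-1}$, $\pi_1(\phi_f)$ and $\pi_1(\mu_E)$ proves the proposition.

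I expect the only real work to lie in the third paragraph: checking that the proof of Proposition \ref{isafibration} transports faithfully to $\phi_f$ once $\cG$ is replaced by the reductive group scheme $Ad(E)$ and the facets $\sigma_x$ by the vertices $v_x$. The substantive verifications are that $v_x$ lies in $\overline{\mathbf{a}^x}$, so that Corollary \ref{borelpar} applies and the image of $\cG_{\mathbf{a}^x}$ is the alcove Borel of $G^{v_x}$, making the fiber a \emph{full} flag variety. The genuinely new ingredient beyond Proposition \ref{isafibration} is the simple-connectedness of these fibers, which is precisely what makes both instances of the fibration sequence (\ref{fibseq}) degenerate into isomorphisms.
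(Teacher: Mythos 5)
Your proof is correct and follows essentially the same route as the paper: both legs of the Hecke correspondence $\cM_X(\cG) \leftarrow \cM_X(\cG^{\mathbf{a}}) \rightarrow \cM_X(G)$ are fibrations whose fibers are connected, simply-connected products of full flag varieties, so the exact sequence (\ref{fibseq}) collapses to an isomorphism on $\pi_1$ in each case. The only difference is that you explicitly justify why the argument of Proposition \ref{isafibration} transports to $\phi_f$ (via $v_x \in \overline{\mathbf{a}^x}$ and Corollary \ref{borelpar}), a point the paper leaves implicit by citing Proposition \ref{isafibration} directly for the map to $\cM_X(G)$.
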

\begin{proof}   Consider the map (\ref{mapfromiwahoritoG}). By Proposition \ref{isafibration} it is a fibration. By Corollary \ref{borelpar}, its fibers are isomorphic to $\prod_{x \in \cR} G/B$. We choose any $y: \Spec(k) \ra \cM_X(\cG^{\mathbf{a}})$ with image $x$. So by (\ref{fibseq}), it follows that $\cM_X(\cG^{\mathbf{a}}) \ra \cM_X(G)$ induces an isomorphism on $\pi_1$.

Now consider the map $\cM_X(\cG^{\mathbf{a}}) \ra \cM_X(\cG)$. By Proposition \ref{isafibration} it is a fibration. By Corollary \ref{borelpar}, its fibers are isomorphic to $\prod_{x \in \cR} F^{\sigma_x}$. This is connected and simply-connected. We choose any $y: \Spec(k) \ra \cM_X(\cG^{\mathbf{a}})$ with image $x$.  So by (\ref{fibseq}), it follows that this map also induces an isomorphism on $\pi_1$.

\end{proof} 

\begin{Cor} \label{cstack} When $k=\CC$, the fundamental group $\pi_1(\cM_X(\cG),x)$ is trivial.
\end{Cor}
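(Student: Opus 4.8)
The plan is to deduce triviality from Proposition \ref{pi1stack} together with the known simple-connectivity of the moduli stack of principal $G$-bundles over $\CC$. First I would fix the base point $x: \Spec(k) \ra \cM_X(\cG)$ and lift it, exactly as in the proof of Proposition \ref{pi1stack}, to a point $y: \Spec(k) \ra \cM_X(\cG^{\mathbf{a}})$ mapping to $x$ under $\pi$, and then push it forward under $\mu_E \circ \phi_f$ to a base point of $\cM_X(G)$. Proposition \ref{pi1stack} then supplies a chain of isomorphisms
\begin{equation*}
\pi_1(\cM_X(\cG),x) \;\cong\; \pi_1(\cM_X(\cG^{\mathbf{a}}),y) \;\cong\; \pi_1(\cM_X(G)),
\end{equation*}
the first arising from the fibration $\pi$ with connected, simply-connected fibers $\prod_{x \in \cR} F^{\sigma_x}$, and the second from the fibration $\mu_E \circ \phi_f$ with fibers $\prod_{x \in \cR} G/B$. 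Hence it suffices to prove that $\pi_1(\cM_X(G))$ is trivial.

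For the second step, I would invoke \cite{bmp}. Over $k = \CC$, since $G$ is almost simple and simply connected, the moduli stack $\cM_X(G)$ of principal $G$-bundles on the smooth projective curve $X$ is connected, and its \'etale fundamental group is trivial. Feeding this into the isomorphism chain above immediately gives $\pi_1(\cM_X(\cG),x) = \{*\}$, which is the assertion.

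The genuine content here sits entirely in the input \cite{bmp}; the corollary itself only reorganizes the two fibration-induced isomorphisms already established. The one point requiring a small remark is base-point independence: the isomorphisms of Proposition \ref{pi1stack} are formulated with compatible base points propagated through the two fibrations, but since $\cM_X(\cG)$, $\cM_X(\cG^{\mathbf{a}})$ and $\cM_X(G)$ are all connected, the fundamental group is determined up to (inner) isomorphism independently of the chosen base point, so triviality for one choice yields triviality for $x$. I therefore do not anticipate any serious obstacle beyond correctly citing the simple-connectivity of $\cM_X(G)$ over $\CC$.
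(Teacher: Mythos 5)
Your proposal is correct and matches the paper's own proof: both deduce the result by combining Proposition \ref{pi1stack} with the triviality of $\pi_1(\cM_X(G))$ over $\CC$ from \cite{bmp}. The extra remarks about base points and the intermediate stack $\cM_X(\cG^{\mathbf{a}})$ only unpack what Proposition \ref{pi1stack} already establishes.
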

\begin{proof} When $k=\CC$, taking $\cG$ to be the constant group scheme $X \times G$ over $X$, by  \cite[Thm 3.4]{bmp} the \'etale fundamental group of $\cM_X(G)$ is trivial. Then our result follows from Proposition \ref{pi1stack}.
\end{proof}

\section{The moduli space $M_X(\cG)$ of $\cG$ torsors} \label{modulispace}
In \cite{bs} over $\CC$, the notions of (semi)-stability and polystability of parahoric torsors has been defined. The precise definitions are fairly technical and we will not need them.

\subsection{$Z_G$ gerbe} Let us recall the notion of a gerbe banded by a group scheme. Our reference is \cite[\S 2.2]{lieblich}.  To this end, let us recall some notions associated to gerbes. Let $\cX \ra X$ be an algebraic stack over an algebraic space $X$. Recall that the {\it inertia stack} $\cI(\cX) \ra \cX$ is a representable group functor defined by the following condition: Let $a:T \ra \cX$ be a $\cX$-scheme. We have
\begin{equation} \label{inertiastack}
\cI(\cX)(T)=Aut(a:T \ra \cX).
\end{equation}
Explicitly we may construct $\cI(\cX)$ as $\cX \times_{\Delta,\cX \times \cX,\Delta} \times \cX$.

We have a natural functor from the category of sheaves on $X$ to the category of stacks on $X$: given a sheaf $\cF$, for any $U \ra X$ we define the fiber category $\cF_U$ as the discrete category $\cF(U)$. The sheafification $Sh(\cX)$ of $\cX$ is the universal object amongst sheaves $\cF$ on the algebraic space $X$ admitting a map $\cX \ra \cF$ of stacks.

A gerbe on an algebraic space $X$ is a stack $\cX \ra X$ such that $Sh(\cX) \ra X$ is an isomorphism. Equivalently we demand of $\cX \ra X$ that
\begin{enumerate}
\item for every open set $U \ra X$, there exists a covering $V \ra U$ such that the fiber category $\cX_V$ is non-empty, and
\item given an open set $U \ra X$ and any two objects $x,y \in \cX_U$, there exists a covering $V \ra U$ and an isomorphism between $x_V$ and $y_V$.
\end{enumerate}

Let $A$ be an abelian sheaf on the algebraic space $X$. An $A$-gerbe on $X$, or a gerbe banded by a group scheme $A$, is a gerbe $\cX$ alongwith an isomorphism
\begin{equation} \label{gerbebandedbyagroup}
A_{\cX} \simeq \cI(\cX).
\end{equation}
A $1$-morphism $f: \cX \ra \cY$ of stacks between $A$-gerbes is called an isomorphism if the natural morphism $A_{\cX} \simeq \cI(\cX) \ra f^* \cI(\cY) \simeq A_{\cY}$ is the identity.
\begin{prop} \label{isagerbe} The morphism $\cM_X^{rs}(\cG) \ra  M_X^{rs}(\cG)$ is a gerbe banded by $Z_G$.
\end{prop}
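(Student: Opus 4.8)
The plan is to identify the automorphism group of every regularly stable torsor with the center $Z_G$ in a way compatible with families, and then to recognise $M_X^{rs}(\cG)$ as the sheafification of $\cM_X^{rs}(\cG)$, after which the gerbe axioms follow essentially formally. Set $\cM = \cM_X^{rs}(\cG)$ for brevity. First I would exhibit $Z_G$ as a central subgroup scheme of $\cG$. Since $\cG|_{X^\circ} \simeq X^\circ \times G$ contains $X^\circ \times Z_G$, and since for each $x \in \cR$ the torus $T(\hat{\cO_x})$, hence the center $Z_G$, lies in every parahoric $\cG_{\sigma_x}$ (cf.\ (\ref{defnpara})), the gluing functions $f_x \in G(K_x)$ act on the center by the trivial inner automorphism. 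Thus the local copies glue to a central subgroup scheme $Z \hra \cG$ with $Z \simeq X \times Z_G$. For any $\cG$-torsor $\cE$ the right $\cG$-action restricted to $Z$ commutes with everything and produces central automorphisms; taking global sections over the connected curve $X$ gives a canonical central homomorphism $Z_G \ra \Aut(\cE)$. As $Z$ is pulled back from the constant group this is natural in $\cE$, and with the inertia computed as in (\ref{inertiastack}) it defines a central monomorphism of group schemes over the stack
\[
(Z_G)_{\cM} \hra \cI(\cM_X(\cG)).
\]

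Next I would invoke the defining property of regular stability from \cite{bs}: a regularly stable $\cG$-torsor $\cE$ is stable with the smallest possible automorphism group, namely $\Aut(\cE) = Z_G$. Hence over $\cM$ the monomorphism above is an isomorphism, so that $\cI(\cM) \simeq (Z_G)_{\cM}$, which is exactly the banding isomorphism (\ref{gerbebandedbyagroup}). Since this identification is the tautological central one coming from the first step, it is canonical and natural in families.

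Finally I would check the two gerbe conditions. By its construction over $\CC$ in \cite{bs} the moduli space $M_X^{rs}(\cG)$ is the coarse moduli space of $\cM$; because the inertia $Z_G$ is finite, this coarse space coincides with the fppf sheafification, so $Sh(\cM) \simeq M_X^{rs}(\cG)$ and the morphism in the statement is the canonical map to the sheafification. With this identification both axioms are formal: a section of $M_X^{rs}(\cG)$ over $U$ is, fppf-locally on $U$, the class of an actual torsor, giving local non-emptiness of the fibre categories; and two torsors over $U$ with the same image in $M_X^{rs}(\cG)$ become equal in the sheafification and hence are fppf-locally isomorphic (their sheaf of isomorphisms being a nonempty $Z_G$-pseudotorsor, thus locally trivial). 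Combined with the banding of the previous paragraph, $\cM_X^{rs}(\cG) \ra M_X^{rs}(\cG)$ is a gerbe banded by $Z_G$.

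The main obstacle is the second step: the equality $\Aut(\cE) = Z_G$ for regularly stable $\cE$ together with its functoriality. One must be sure that the definition of regular stability in \cite{bs} is precisely the condition that the automorphism group scheme of $\cE$ (the conjugation twist of $\cG$) has only the central global sections, and that this is what cuts out the open substack over which the central inclusion of the first paragraph becomes an isomorphism. The identification $Sh(\cM) \simeq M_X^{rs}(\cG)$ is the other delicate point, relying on the GIT construction over $\CC$, which is also why the statement is confined to the regularly stable locus.
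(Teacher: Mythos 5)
Your proposal is correct and rests on the same key input as the paper's proof: regular stability forces $\Aut(\cE)=Z_G$, so the inertia stack of $\cM_X^{rs}(\cG)$ is the constant group $Z_G$. The difference is in how the banding is produced and how much is actually verified. The paper argues very briefly: the fibers of $\cI(\cM^{rs})\ra\cM^{rs}$ are $Z_G$ by definition of regular stability, and since $\cI(\cM^{rs})$ has an identity section and $\cM^{rs}$ is connected one concludes $\cI(\cM^{rs})=\cM^{rs}\times Z_G$; the gerbe conditions themselves are not checked. You instead build the trivialization canonically, by exhibiting $X\times Z_G$ as a central subgroup scheme of $\cG$ (using $Z_G\subset T(\hat{\cO_x})\subset\cP_{\sigma_x}$ and the fact that the gluing data acts by inner automorphisms, hence trivially on the center), which gives a natural central monomorphism $(Z_G)_{\cM}\hra\cI(\cM)$ that regular stability promotes to an isomorphism; and you separately verify the two gerbe axioms via the identification of $M_X^{rs}(\cG)$ with the sheafification of $\cM_X^{rs}(\cG)$. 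This buys you a banding that is visibly functorial in families (rather than obtained from a connectedness argument) and an actual verification of local non-emptiness and local isomorphism of objects, both of which the paper leaves implicit. One small caveat: the paper defines regular stability of a $\cG$-torsor through the $\Gamma$-$G$ bundle correspondence (\ref{sbs}), so the equality $\Aut(\cE)=Z_G$ you invoke should be read off by transporting automorphism groups across that equivalence; this is harmless but worth stating.
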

\begin{proof}
Consider $\cI(\cM^{rs}) \ra \cM^{rs}$ (cf (\ref{inertiastack})). By definition the fiber over any closed point is isomorphic to $Z_G$. Since $\cI(\cM^{rs})$ has the identity section and $\cM^{rs}$ is connected, so we must have $\cI(\cM^{rs})=\cM^{rs} \times Z_G$. Therefore by definition (cf (\ref{gerbebandedbyagroup})), it follows that $\cM^{rs} \ra M^{rs}$ is a $Z_G$-gerbe.
\end{proof}

\subsection{$\Gamma$-$G$ bundle theory} \label{prelimbs}
In \cite[\S 7]{bbp}, given arbitary {\it real} weights ${\boldsymbol \theta}= \{\theta_x  \in \cA_T\}_{\x \in \cR}$, it is shown how we can find {\it rational} weights ${\boldsymbol \theta'}$ so that a given torsor is (semi)-stable with respect to ${\boldsymbol \theta}$ if and only if it is (semi)-stable with respect to ${\boldsymbol \theta'}$. Varying weights in this sense, we may suppose from now that all weights are rational.  Let $M_X(\cG)$ denote the moduli space of $S$-equivalence classes of semi-stable parahoric $\cG$-torsors.
 
Let $Y \ra X$ be a ramified Galois cover of smooth projective curves with Galois group $\Gamma$. By a $\Gamma$-$G$ bundle $F$ we mean a principal $G$-bundle $F$ on $Y$ together with a lift of $\Gamma$-action. For $y \in \cR$ let $\Gamma_y$ denote the isotropy group at $y$.  Let $\tau_y$ denote the representation 
\begin{equation} \label{typetau} \tau_y : \Gamma_y  \ra \Aut(F_y).
\end{equation} 
Choosing a trivialization of $F_y$, the conjugacy class $\ol{\tau_y}$ of $\tau_y$ with values in $G$ is well determined.
Then $\tau = \{ \ol{\tau_y} | y \in \cR_Y \}$ is called the {\it topological type of $F$}.
The following subsection reduces questions for parahoric torsors with rational weights ${\boldsymbol \theta}$ to $\Gamma$-$G$ bundles of a certain type $\tau$. We refer the reader to \cite{bs} for the correspondence between ${\boldsymbol \theta}$ and $\tau$.
We will not need it.

Let $X$ be a smooth projective curve of genus $g_X \geq 2$. Let $\cG \ra X$ be a Bruhat-Tits group scheme in the sense of (\S \ref{gpsch}). By the main theorem of \cite{bs} there exists a (possibly ramified) finite Galois cover $p: Y \ra X$ with Galois group $\Gamma$ branched at $\cR$ and a principal $G$-bundle $E$ on $Y$ together with a lift of $\Gamma$ action satisfying the following properties:
\begin{enumerate}
\item Let  $y \,\in\, Y$ be a closed point with $x\,:=\,p(y)$. Let $N_y\,=\,{\rm Spec}(B)$, where $B\,=\,
\hat{\cO}_{Y,y}$, and $D_x\,=\,{\rm Spec}(A)$ with $A\,=\,\hat{\cO}_{X,x}$. Let
$U_y$ denote the local unit group i.e the group of local $\Gamma_y$--$G$ automorphisms of $E|_{N_y}$ (cf.
\cite[Definition 2.2.7]{bs}). Then by \cite[Proposition 5.1.2]{bs} we have
\begin{equation}\label{lug}
\cG|_{D_x}(A)\,=\,U_y\, .
\end{equation}
\item Let $Ad(E) \ra Y$ denote the adjoint group scheme of $E$. We have 
\begin{equation} \label{gpscheqn} p_*^{\Gamma} Ad(E) = {\mathcal 
G}.
\end{equation} 
\item    Then we have an isomorphism of moduli stack
\begin{equation} \label{sbs}
\cM_Y^{\tau}(\Gamma, G) = \cM_X(\cG)
\end{equation}
given by sending a $\Gamma$-$G$ bundle $F$ to $p_*^\Gamma (F \times_G E^{op})$. In the reverse direction one sends $\cF \in \cM_X(\cG)$ to $p^*(\cF) \times_{p^* \cG} E$. Observe here that $p^* \cG$ acts on $E$ on the left by adjunction and that the topological type of $p^*(\cF) \times_{p^* \cG} E$ is that of $E$. 
\item under the above isomorphism (semi)-stable  $\Gamma$-$G$ bundles are identified with (semi)-stable $\cG$-torsors respectively. Furthermore the above mentioned identification passes to $S$-equivalence classes and we get
\begin{equation} \label{msi}
M_Y^{\tau}(\Gamma,G) = M_X(\cG).
\end{equation}

\end{enumerate}

Let $m$ be the number of branch points of $p$ and let $\{n_i\}$ be the corresponding ramification indices. Let $K_G$ denote the maximal compact subgroup of $G$ and $\pi$ be a Fuchsian group generated by $2 g_X +m$ elements as follows:
\begin{equation} \label{generators} \pi= \frac{<A_1,B_1,\cdots,A_g,B_g, C_1,\cdots,C_m>}{<A_1B_1A_1^{-1}B_1^{-1} \cdots A_g B_g A_g^{-1} B_g^{-1} C_1 \cdots C_m=1,C_i^{n_i}=1>}
\end{equation} 

Let $K_G$ denote the maximal compact subgroup of $G$. By \cite[Corollary 8.1.8(1)]{bs}, a stable $\Gamma$-$G$ bundle $E$ corresponds to an irreducible unitary representation $\rho: \pi \ra K_G$.  
{\it We shall call the above set of equivalences $\Gamma$-$G$ bundle theory}. 
\subsection{Applications of $\Gamma$-$G$ bundle theory} \label{appli}

We recall that a principal $G$ bundle  $F$ is called {\it regularly stable} if $F$ is stable and the natural morphism $Z_G \ra \Aut(F)$ is an isomorphism. We shall say that a $\cG$ torsor $\cF$ is regularly stable if under the isomorphism \ref{sbs} the corresponding $\Gamma$-$G$ bundle $F$ on $Y$ is regularly stable. When the weights are {\it real} this definition is independent of the nearby rational weight chosen as above. This follows immediately from \cite[Prop 7.5]{bbp}. 




 \begin{prop}  \label{codimestimate} Let $G$ be a semi-simple simply-connected group. Over $\CC$, when $g_X \geq 3$,
the open substack $\cM_X^{rs}(\cG)$ of regularly stable torsors has complement of co-dimension at least two.
\end{prop}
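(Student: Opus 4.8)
The plan is to pass through the $\Gamma$-$G$ bundle theory of \S\ref{prelimbs} and reduce the codimension estimate for $\cM_X^{rs}(\cG)$ to a corresponding estimate for the stack $\cM_Y^\tau(\Gamma,G)$ of $\Gamma$-$G$ bundles of type $\tau$ on the Galois cover $p:Y\ra X$. By the isomorphism (\ref{sbs}), a $\cG$-torsor $\cF$ is regularly stable precisely when the associated $\Gamma$-$G$ bundle $F$ is regularly stable, so the complement of $\cM_X^{rs}(\cG)$ corresponds to the locus of $\Gamma$-$G$ bundles that are either not stable or stable but with automorphism group strictly larger than $Z_G$. I would estimate the codimension of each of these two pieces separately.

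First I would treat the non-stable locus. Over $\CC$, via the identification of stable $\Gamma$-$G$ bundles with irreducible unitary representations $\rho:\pi\ra K_G$ of the Fuchsian group $\pi$ of (\ref{generators}) (by \cite[Cor 8.1.8(1)]{bs}), the dimension of $\cM_Y^\tau(\Gamma,G)$ can be computed from the character variety, and the non-stable (strictly semistable or unstable) locus is governed by representations factoring through proper parabolic/Levi subgroups. The expected codimension of this locus grows linearly in $g_X$, and the condition $g_X\geq 3$ should be exactly what is needed to push it past two. I would carry out the standard deformation-theoretic count: at a point corresponding to $F$, the tangent space is $H^1$ of the adjoint bundle (twisted by the $\Gamma$-action), and the failure of stability cuts out a locus whose normal directions can be counted against the parabolic reduction. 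The second, more delicate, piece is the locus of stable-but-not-regularly-stable bundles, i.e. stable $F$ with $\Aut(F)/Z_G\neq 1$; such extra automorphisms force a reduction of structure group to a proper reductive subgroup, and again one counts the codimension of the corresponding substack of reductions.

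The main obstacle I expect is the second piece: bounding below the codimension of the stable, non-regularly-stable locus uniformly over all the discrete data (the conjugacy classes $\{\ol{\tau_y}\}$ and the reductive subgroups arising as $\Aut(F)$). Unlike the ordinary $G$-bundle case, the presence of the ramification/weight data $\tau$ shifts the relevant dimension counts at the marked points, so the clean $g_X\geq 3$ threshold has to survive these local corrections at each $y\in\cR_Y$. I would handle this by reducing to a finite list of possible over-symmetry types (Levi subgroups containing $Z_G$ properly), and for each, bounding the dimension of the space of such $F$ by a parabolic-induction/Harder--Narasimhan type argument, checking that the worst case still yields codimension $\geq 2$ once $g_X\geq 3$. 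A useful simplification is that, since $G$ is semisimple simply-connected and $\cG$ is generically trivial, one may invoke the explicit dimension formula $\dim\cM_X(\cG)=(g_X-1)\dim G + \sum_{x\in\cR}\dim(\cG_{\sigma_x}/\cG^u_{\sigma_x})/B_x$ implicit in \S\ref{lgpthedata}, against which the lower-dimensional strata are compared.

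Finally, I would assemble the two codimension bounds: taking the minimum of the codimensions of the non-stable locus and the stable-non-regularly-stable locus gives the codimension of the complement of $\cM_X^{rs}(\cG)$, and verifying that both exceed one under $g_X\geq 3$ completes the proof. This estimate is precisely the input needed to apply Proposition \ref{codim2} and conclude the triviality of $\pi_1(M_X(\cG))$ in the subsequent application.
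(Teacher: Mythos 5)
Your plan follows essentially the same route as the paper: pass to $\Gamma$-$G$ bundles via (\ref{sbs}), split the complement into the non-stable locus (which the paper handles by comparing $H^1(Y,\Gamma,E(\gfr))$ with the corresponding group for a $\Gamma$-equivariant parabolic reduction via Riemann--Roch, giving codimension $\geq g_X-1$) and the stable-but-not-regularly-stable locus (which the paper handles by reducing structure group to the centralizer $Z_g$ of the semisimple element attached to an extra automorphism and comparing the dimension formula $\dim G\,(g_X-1)+\tfrac12\sum_i e_G(C_i)$ for $G$ and for $Z_g$). The one imprecision is that the relevant ``over-symmetry'' subgroups are centralizers of semisimple elements (pseudo-Levi subgroups of maximal rank), not necessarily Levi subgroups, but this does not change the strategy.
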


\begin{proof} We divide the proof  in two steps. In step one, we show that the locus of stable but not regularly stable torsors has codimension at least two. In step two, we show that the non-stable torsors have codimension at least two.

Let $\cE$ be stable but not regularly stable torsor. Since we can assume that our weights $\{\theta_x | x \in \cR \}$ are {\it rational}, so  let $\cE$ corresponds to $E \ra Y$ on $p: Y \ra X$ by $\Gamma$-$G$ bundle theory.  Let  $Aut(E)$ (resp. $Aut_Y(E)$) denote the group of automorphisms of $E$ as a $\Gamma$-$G$ (resp. principal $G$-) bundle. Now $Aut_Y(E)$ is an algebraic group (cf \cite[page 227]{bbn2005}) and therefore $Aut(E)=Aut_Y(E)^\Gamma$ is also an algebraic group.  By a straightforward generalization of \cite[Prop 3.2]{ramanathan75}, it follows that $Aut(E)$ is a finite group since $G$ is semi-simple and $E$ is $\Gamma$-$G$ stable. Let us describe a map from $Aut(E)$ to the set of conjugacy classes of finite order elements in $G$.

Let $Ad(E)$ denote the adjoint group scheme of $E$. Let us temporarily choose a point $y \in Y$ and an isomorphism $\theta: Ad(E)_y \simeq G$.  Let 
\begin{equation}
e_y: Aut(E) \ra Ad(E)_y \stackrel{\theta}{\lra} G
\end{equation}
denote the evaluation map. It is a morphism of algebraic groups (cf \cite[Proof of Prop 2.4]{bbn2005}). Further under $e_y$ the natural inclusion $Z_G \hra Aut(E)$ surjects onto $Z_G \subset G$. Let $\psi \in Aut(E)$ be an arbitrary element and  let 
\begin{equation} 
e_y(\psi)=g. \end{equation} So the conjugacy class of $g$ is independent of the choice of the identification $\theta$ of $Ad(E)_y$ with $G$. Further, $g$ is a semi-simple element because it has finite order. We fix  a maximal torus $T$ in $G$ containing $g$. Let $W(T)$ denote the corresponding Weyl group. The space of conjugacy classes of semi-simple elements is parametrized by the quotient $T/W(T)$.  Since $Y$ is projective and $T/W(T)$ is affine, so the conjugacy class of $g$ is independent of $y$ too. Consider the morphism $\theta: E \ra G$ defined by the relation $\psi(e)=e \theta(e)$ for $e \in E$. Notice that for any $e \in E$, $\theta(e)$ belongs to the conjugacy class of $g$.

Let $Z_g$ denote the centralizer of $g$ in $G$. Let us describe the group $Z_g$ and show that $E$ admits a $\Gamma$-equivariant reduction of structure group to $Z_g$. Since the case $\psi \in Z_G$ is trivial, so let us assume that $\psi \in Aut(E) \setminus Z_G$.

By \cite[Prop 0.35]{dignemichel}, the centre $Z_G$ is the intersection in $T$ of the kernels of the roots relative to $T$. Therefore if $\psi \in Aut(E) \setminus Z_G$, then there is a root relative to $T$ not vanishing on $g$.  Let $Z_{g0}$ denote the connected component of identity in $Z_g$.
 By \cite[2.7, 2.8]{steinberg65}, the centralizer $Z_{g0}$ is a connected reductive group of the same rank as $G$ which has finite index in $Z_g$. Further, it is generated by $T$ and $X_\alpha$ for roots $\alpha$ (relative to $T$) such that $\alpha(g)=1$. Therefore $Z_g$ is a proper subgroup of $G$ of strictly smaller dimension when $\psi \in \Aut(E) \setminus Z_G$.

We now adapt the proof of \cite[Prop 2.4]{bbn2005} to show the reduction of structure group.  Let $S \subset E$ denote the subvariety of $E$ defined by $e \in E$ such that $\theta(e)=g$. One can check that for any $y \in Y$ the fiber $S_y$ is non-empty and that $Z_g$ acts transitively on $S$. So $S$ defines a reduction of structure group of $E$ from $G$ to $Z_g$. Further since $\psi \in Aut(E)=Aut_Y(E)^\Gamma$, so this reduction is $\Gamma$-equivariant. In conclusion, $E$ comes from an extension of structure group of a $\Gamma-Z_g$-bundle $S$. Since $E$ comes from an irreducible unitary representation $\rho: \pi \ra K_G$, therefore $S$ also comes from an irreducible unitary representation $\rho_g: \pi \ra K_{G_{g}}$. In particular, $\rho$ factors via  $\rho_g$ and $K_{Z_g} \hra K_G$.
Notice that the local representation type of $S$ gets fixed by that of $E$.

We now prooceed to estimate the dimensions. Since $E$ is stable but not regularly stable, so we will assume now that $\psi \in Aut(E) \setminus Z_G$. In particular, $g \in G \setminus Z_G$. For each ramification point $y_i \in Y$, let $\rho_{y_i}: \Gamma_{y_i} \ra \Aut(E_{y_i})$ be the local isotropy representation (cf (\ref{typetau})). Choosing an isomorphism $\Aut(E_{y_i}) \simeq G$, since $\Gamma_{y_i}$ is a finite order element, so we may consider $\rho_{y_i}: \Gamma_{y_i} \ra T$, if we  need after conjugation.  Recall that $C_i$ (cf (\ref{generators})) denote finite order generators of the Fuchsian group $\pi$. We identify $C_i$ with a generator of $\Gamma_{y_i}$. Let us denote
\begin{equation} e_G(C_i)= rank(Id-\rho_{y_i}(C_i))
\end{equation} on $Lie(K_G)$. When $G$ is semi-simple (not necessarily connected or simply-connected), by \cite[Cor 8.1.12 and Prop 7.1.1]{bs}) the dimension of the moduli space of parahoric torsors or equivalently $\Gamma$-$G$ bundles of fixed topological type $\tau$ is equal to 
\begin{equation} \label{dimformula}
dim_{\CC}(G) (g_X-1) +\frac{1}{2} \sum_{i=1}^m e_G(C_i)
\end{equation}
This formula is independent of the choice of isomorphisms $\Aut(E_{y_i}) \simeq G$. Since a generic $\Gamma$-$G$ bundle is regularly stable, so by Proposition \ref{isagerbe} the dimension of the moduli stack of $\Gamma$-$G$ bundles is the same as that of the moduli space which is given by the formula (\ref{dimformula}). We now want to deduce a formula for $\Gamma$-$Z_g$ bundles because $Z_g$ is only reductive. To this end, set $Z_{g}^s=Z_g/centre(Z_g)$ and $Z_g^a=Z_g/[Z_g,Z_g]$ as the semi-simple and abelian quotients of $Z_g$ respectively. The natural projection  
  \begin{equation} Z_g \ra Z_g^s \times Z_g^a
  \end{equation} has finite kernel and cokernel. So the dimension of the stack of $\Gamma$-$Z_g$ bundles of a fixed topological type equals the sum of the dimensions of the stacks of $\Gamma$-$Z_g^s$ and $\Gamma$-$Z_g^a$ bundles of the corresponding topological type. We further have \begin{equation} Lie(K(Z_g))=Lie(K(Z_g^s)) \oplus Lie(K(Z_g^a)).
  \end{equation}  We have $$ e_G(C_i)=rk(Id_{Lie(K_{G})}- \rho(C_i)) \geq rk(Id_{Lie(K_{Z_g})}- \rho_g(C_i))$$ because $Id_{Lie(K_{G})}-\rho(C_i)$ on $Lie(K(Z_g))$ restricts to $Id_{Lie(K_{Z_g})}- \rho_g(C_i)$.  Let $\rho^s: \pi \ra K(Z_g^s)$ be the composite $ \pi \stackrel{\rho_g}{\lra} K(Z_g) \ra K(Z_g^s)$. Since 
$\rho_g(C_i)$ on $Lie(K(Z^s_g))$ restricts to $\rho_g^s(C_i)$ but on $Lie(K(Z^a_g))$ restricts to zero, so this  last term is at least $$ rk(Id_{Lie(K(Z^s_g))}- \rho^s(C_i))+ dim_{\mathbb{R}}(Lie(K(Z_g^a))),$$ which equals $e_{Z_g^s}(C_i)+ dim_{\mathbb{C}}(Z_g^a)$.  Therefore we have
\begin{equation}
e_G(C_i) \geq e_{Z_g^s}(C_i)+ dim_{\mathbb{C}}(Z_g^a).
\end{equation}

On the other hand, since $Z_g^a$ is a torus, so $\Gamma$-$Z_g^a$ bundles of a fixed topological type, correspond to a direct sum of $\Gamma$-line bundles of a fixed topological type. These correspond to a direct sum of parabolic line bundles of fixed parabolic type. Hence their dimension is equal to $dim(Z_g^a)(g_X-1)$.

Thus from the dimension formula (\ref{dimformula}) we have the dimension of $\cM^{rs}_X(\cG)$ minus the dimension of the substack of stable bundles admitting reduction of structure group to $Z_g$ equals
 \begin{eqnarray*} dim(G) (g_X-1) +\frac{1}{2} \sum_{i=1}^m e_G(C_i)-[dim(Z_g^s) (g_X-1)  +\frac{1}{2} \sum_{i=1}^m e_{Z_g^s}(C_i)] - dim(Z_g^a)(g_X-1) \\ \geq \{dim(G)- dim(Z_g^s) -dim(Z_g^a)\} (g_X-1) + \frac{m}{2} dim(Z^a_g). 
 \end{eqnarray*}
  Let $k$ be the number of simple roots of $G$, say relative to $T$, that do not take the value one on $g$. Then $k=dim(Z_g^a)$, $k \geq 1$ and we have $dim(Z_g^s) \leq dim(G) - 3k$. Therefore the above sum is at least $k(2 (g_X-1) +\frac{m}{2})$.

 Since $m \geq 0$, $k \geq 1$  and $g_X \geq 3$ for us, so this is bounded below by four. 
 
Now let us show step two. Let us assume that $\cE$ is not a stable $\cG$-torsor. So $E$ is   not a stable $\Gamma$-$G$ bundle. Hence $E$ admits a $\Gamma$-equivariant reduction $E_P$ of structure group to a parabolic subgroup $P$ of $G$. Since $G$ is semi-simple, so this means that $deg(E_P \times_P \mathfrak{p}) \geq 0$ and the degree of $E (\gfr)$ is zero.  Let $H^0(Y,\Gamma,-)$ denote the space of $\Gamma$-invariant sections of a $\Gamma$-sheaf on $Y$ and let $H^i(Y,\Gamma,-)$ denote its higher derived functors. 

Since  $E_P \times_P \mathfrak{p} \hra E \times_G \mathfrak{g}$ is a sub-bundle, so
\begin{equation} \label{h0estimate} h^0(Y,\Gamma,E_P \times_P \mathfrak{p}) \leq h^0(Y,\Gamma,E(\mathfrak{g})).
\end{equation} Now the tangent space at $E$ is isomorphic to $H^1(Y, \Gamma, E(\gfr))$. Similarly the tangent space at $E_P$ of the stack of $\Gamma$-$P$ bundles is $H^1(Y,\Gamma,E_P \times_P \mathfrak{p})$. Hence by (\ref{h0estimate}) we get
$$
h^1(E(\gfr))-h^1(E_P \times_P \mathfrak{p}) \geq \chi (E(\gfr)) - \chi (E_P \times_P \mathfrak{p}).$$ Let $pdeg$ denote the parabolic degree of the associated parabolic vector bundles. So by Riemann-Roch, this last expression becomes
$$dim(G) (g_X-1) - pdeg(E(\gfr)) - [dim(P) (g_X-1) - pdeg(E_P \times_P \mathfrak{p})]$$ $$ =
[dim(G)-dim(P)](g_X-1) + pdeg(E_P \times_P \mathfrak{p}) - pdeg(E(\gfr))  \geq g_X-1 \geq 2
.$$ The second last inequality follows from  $pdeg(E(\gfr))=0$ and $pdeg(E(\mathfrak{p})) \geq 0$.
\end{proof}

\section{Fundamental group of moduli spaces}
\subsection{Arbitrary characteristic} 
\begin{prop} When $g_X \geq 4$, we have a surjection $\pi_1(\cM_X(G)) \ra \pi_1(M_X(G))$.
\end{prop}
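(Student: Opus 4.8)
The plan is to relate the stack to its coarse space through their common regularly stable locus, exactly as in the $\cG$-torsor case treated above. Write $\cM_X^{rs}(G) \subset \cM_X(G)$ for the open substack of regularly stable $G$-bundles and $M_X^{rs}(G) \subset M_X(G)$ for the corresponding open subset of the moduli space. I would produce the asserted surjection as the composite
\[
\pi_1(\cM_X(G)) \xrightarrow{\ \sim\ } \pi_1(\cM_X^{rs}(G)) \twoheadrightarrow \pi_1(M_X^{rs}(G)) \twoheadrightarrow \pi_1(M_X(G)),
\]
in which the first arrow is the inverse of the isomorphism induced by the inclusion $\cM_X^{rs}(G) \hra \cM_X(G)$ (Proposition \ref{codim2}), the second is induced by the gerbe $\cM_X^{rs}(G) \ra M_X^{rs}(G)$, and the third by the open dense inclusion $M_X^{rs}(G) \hra M_X(G)$. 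Since the first map is an isomorphism and the other two are surjective, the composite is the required surjection.

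To invoke Proposition \ref{codim2} for the first arrow I must verify that $\cM_X(G)$ is smooth, connected, irreducible and noetherian, which is known, and that the complement of $\cM_X^{rs}(G)$ has codimension at least two. This complement is the union of the unstable locus and the locus of bundles that are stable but not regularly stable, and I would estimate each piece by adapting the two steps of Proposition \ref{codimestimate} to ordinary $G$-bundles (so there are no marked points and $\pardeg$ is replaced by ordinary degree) and to arbitrary characteristic. For the unstable locus, a parabolic (Harder--Narasimhan) reduction together with Riemann--Roch bounds the codimension below by a positive multiple of $g_X-1$: a destabilizing reduction to a parabolic $P$ forces $\deg E(\gfr/\pfr) < 0$, so the Euler characteristic estimate $-\chi(E(\gfr/\pfr))$ grows with $g_X$. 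For the stable-but-not-regularly-stable locus, a non-central automorphism yields a reduction of structure group to the centralizer of a non-central semisimple element, whose codimension is bounded below by $2(g_X-1)$ exactly as in step one. Under the hypothesis $g_X \geq 4$ both bounds exceed two, so the complement has codimension at least two and Proposition \ref{codim2} supplies the first isomorphism.

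For the second arrow I would use that $\cM_X^{rs}(G) \ra M_X^{rs}(G)$ is a gerbe banded by the finite group scheme $Z_G$, the $G$-bundle analogue of Proposition \ref{isagerbe} (the inertia over the regularly stable locus is the constant group scheme $Z_G$). This gerbe is fppf-locally isomorphic to $M_X^{rs}(G) \times BZ_G$, and since being a fibration is fppf-local on the target (cf.\ \S\ref{fibration}), it is a fibration in the sense of Noohi. Its fibre is the connected stack $BZ_G$, whose $\pi_0$ is a point; the exact sequence (\ref{fibseq}) then shows that $\pi_1(\cM_X^{rs}(G)) \ra \pi_1(M_X^{rs}(G))$ is surjective. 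Finally, $M_X(G)$ is irreducible, so by the surjectivity assertion of Proposition \ref{codim2} (applied to the scheme $M_X(G)$ and its open subset $M_X^{rs}(G)$) the third arrow is surjective as well.

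The main obstacle is the codimension estimate of the second paragraph. Proposition \ref{codimestimate} is established over $\CC$ via $\Gamma$-$G$ bundle theory, so for the present statement in arbitrary characteristic I must rerun the dimension counts for ordinary $G$-bundles using only parabolic reductions and centralizers of semisimple automorphisms. Pinning down the sharp constant in these counts is what fixes the genus threshold, and the hypothesis $g_X \geq 4$ is precisely what guarantees that both strata of the complement have codimension at least two; once that is in hand, the gerbe fibration and the irreducibility of $M_X(G)$ make the remaining two surjections routine.
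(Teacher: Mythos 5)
Your overall architecture is exactly the paper's: identify $\pi_1(\cM_X(G))$ with $\pi_1(\cM_X^{rs}(G))$ via Proposition \ref{codim2}, use the $Z_G$-gerbe $\cM_X^{rs}(G)\ra M_X^{rs}(G)$ and the connectedness of $BZ_G$ to get surjectivity onto $\pi_1(M_X^{rs}(G))$, and then pass to $\pi_1(M_X(G))$ using that a dense open in the normal variety $M_X(G)$ induces a surjection on $\pi_1$. Those two latter steps are fine (though note that for the last one you should invoke normality of $M_X(G)$ rather than the surjectivity clause of Proposition \ref{codim2}, whose hypotheses include smoothness; irreducibility alone does not guarantee that a connected \'etale cover restricts to a connected cover of a dense open, whereas normality does, and this is what the paper actually uses).

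The genuine gap is the codimension-two estimate for the complement of $\cM_X^{rs}(G)$ in $\cM_X(G)$, which is the one input that carries the hypothesis $g_X\geq 4$ and the claim of validity in arbitrary characteristic. You defer it, proposing to ``rerun'' the two steps of Proposition \ref{codimestimate} for ordinary $G$-bundles, and you concede that pinning down the constants is the main obstacle. But Proposition \ref{codimestimate} is proved via $\Gamma$-$G$ bundle theory: the dimension formula (\ref{dimformula}) and the reduction to centralizers rest on the correspondence with irreducible unitary representations of a Fuchsian group, none of which is available in positive characteristic. So the adaptation you sketch would not go through as stated, and your accounting of where $g_X\geq 4$ (as opposed to $g_X\geq 3$) enters is left vague. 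The paper closes this gap by citing Biswas--Hoffmann (Lemma 2.1 and Theorem 2.5(ii) of \cite{biswashoffmann2012}), which establishes the codimension bound for $G$-bundles algebraically in arbitrary characteristic precisely under the hypothesis $g_X\geq 4$. Without that citation or an actual characteristic-free dimension count, your argument is incomplete at its load-bearing step.
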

\begin{proof} By \cite[Lemma 2.1 and Theorem 2.5 (ii)]{biswashoffmann2012}, the codimesion of the regularly stable stack is at least two. By Proposition \ref{codim2}, we have $\pi_1(\cM_X(G)) = \pi_1(\cM^{rs}_X(G)$. By Proposition \ref{isagerbe}, $\cM^{rs}_X(G) \ra M_X^{rs}(G)$ is a gerbe banded by $Z_G$. Let $BZ_G=[\Spec(\CC)/Z_G]$ denote the classyfying stack of $Z_G$. Choosing appropriate base points, by $\pi_0(\Spec(k)) \ra \pi_0(BZ_G) \ra *$, we see that $BZ_G$ is connected. Thus $\pi_1(\cM_X^{rs}(G))$ surjects onto $\pi_1(M_X^{rs}(G))$. Since the moduli space $M_X(G)$ is a normal scheme, so $\pi_1$ of $M_X^{rs}(G)$ surjects onto it.
\end{proof}

\subsection{Over $\CC$} We place ourselves now over the complex numbers.
Let $M_X(\cG)$ denote $S$-equivalence classes of semi-stable of semi-stable $\cG$-torsors. 
\begin{prop} The \'etale fundamental group of $M_X(\cG)$ is trivial.
\end{prop}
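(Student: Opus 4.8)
The plan is to establish the triviality of $\pi_1(M_X(\cG))$ by reducing it, via the codimension-two principle of Proposition \ref{codim2}, to the known triviality of $\pi_1(\cM_X(\cG))$ from Corollary \ref{cstack}, passing through the regularly stable locus where the gerbe structure of Proposition \ref{isagerbe} lets us compare the stack and its coarse space. First I would invoke Proposition \ref{codimestimate}: over $\CC$ with $g_X \geq 3$, the open substack $\cM_X^{rs}(\cG)$ of regularly stable torsors has complement of codimension at least two inside $\cM_X(\cG)$. By Proposition \ref{codim2}, the open inclusion $\cM_X^{rs}(\cG) \hookrightarrow \cM_X(\cG)$ then induces an isomorphism $\pi_1(\cM_X^{rs}(\cG)) \simeq \pi_1(\cM_X(\cG))$, and the latter is trivial by Corollary \ref{cstack}. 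Hence $\pi_1(\cM_X^{rs}(\cG))$ is trivial.

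Next I would descend from the stack to the moduli space. By Proposition \ref{isagerbe}, the morphism $\cM_X^{rs}(\cG) \ra M_X^{rs}(\cG)$ is a gerbe banded by $Z_G$. A $Z_G$-gerbe over a connected base is surjective on $\pi_1$: choosing compatible base points, the fibration sequence for the gerbe (the fibers being torsors under the connected-or-finite classifying stack $BZ_G$, which is itself connected since $\pi_0(\Spec(\CC)) \ra \pi_0(BZ_G) \ra *$) forces $\pi_1(\cM_X^{rs}(\cG)) \twoheadrightarrow \pi_1(M_X^{rs}(\cG))$, exactly as in the surjection argument of the preceding proposition. Since the source is trivial, so is $\pi_1(M_X^{rs}(\cG))$.

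Finally I would pass from the open regularly stable locus of the moduli space back to all of $M_X(\cG)$. The coarse moduli space $M_X(\cG)$ of $S$-equivalence classes of semistable $\cG$-torsors is a normal projective variety, and $M_X^{rs}(\cG)$ is a nonempty open subset; the surjectivity part of Proposition \ref{codim2} (or its classical scheme-level analogue, namely that $\pi_1$ of a normal variety surjects onto $\pi_1$ of any nonempty open subset, equivalently that the open inclusion is dominant with irreducible source) yields a surjection $\pi_1(M_X^{rs}(\cG)) \twoheadrightarrow \pi_1(M_X(\cG))$. Combining the three steps gives that $\pi_1(M_X(\cG))$ is a quotient of the trivial group, hence trivial.

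The main obstacle I anticipate is the bookkeeping around base points and the precise invocation of the fibration sequence for the $Z_G$-gerbe: one must check that the relevant exact sequence of Noohi (equation \ref{fibseq}) applies to the non-representable morphism $\cM_X^{rs}(\cG) \ra M_X^{rs}(\cG)$ and that the fiber $BZ_G$ contributes only a surjection (not an isomorphism) on $\pi_1$, so that triviality of the source is enough to conclude triviality of the target without needing to control $\pi_1(BZ_G)$ itself. The codimension and normality inputs are cited wholesale, so the real care lies in this gerbe-to-space descent.
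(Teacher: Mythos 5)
Your proposal is correct and follows essentially the same route as the paper: codimension-two restriction to $\cM_X^{rs}(\cG)$ via Propositions \ref{codimestimate} and \ref{codim2}, triviality of $\pi_1(\cM_X(\cG))$ from Corollary \ref{cstack}, descent through the $Z_G$-gerbe of Proposition \ref{isagerbe} using the fibration sequence, and finally the surjection $\pi_1(M_X^{rs}(\cG)) \twoheadrightarrow \pi_1(M_X(\cG))$ from normality of the projective moduli space. One small slip: the classical fact is that $\pi_1$ of a (smooth) nonempty open subset of a normal variety surjects onto $\pi_1$ of the whole variety, not the reverse as your parenthetical states, though the surjection you actually use points in the correct direction.
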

\begin{proof} By Corollary \ref{cstack}, $\pi_1(\cM_X(\cG))=e$. By Proposition \ref{codimestimate}, when $genus(X) \geq 3$, then the open substack $\cM_X^{rs}(\cG)$ of $\cM_X(\cG)$ consisting of regularly stable torsors has complement of codimension at least two. By Proposition \ref{codim2}, it follows that the \'etale fundamental group of $\cM^{rs}$ is trivial.  By Proposition \ref{isagerbe}, the morphism
\begin{equation}
\cM^{rs}(\cG) \ra M^{rs}_X(\cG)
\end{equation}
is a gerbe banded by $Z_G$. It can be checked directly that this  map satisfies the conditions to be a fibration of algebraic stacks of \S \ref{noohires} i.e after any base-change the sequence (\ref{fibseq}) remains exact.  Therefore choosing the appropriate base points, by the fibration sequence  $ \cdots \ra \pi_1(\cM_X^{rs}(\cG)) \ra \pi_1(M^{rs}_X(\cG)) \ra \pi_0(BZ_G) \ra \cdots$ it follows that the \'etale fundamental group of $M^{rs}_X(\cG)$ is trivial. 

By $\Gamma$-$G$ bundle theory, the moduli space $M_X(\cG)$ is a normal projective variety by \cite[Theorem 8.1.7]{bs}. It admits $M^{rs}_X(\cG)$ as a simply-connected non-empty open subvariety.  For a normal variety, any smooth open subvariety will induce surjection on fundamental group. Let $U$ be the smooth subvariety of $M^{rs}_X(\cG)$. Thus $\pi_1(U)$ surjects onto $\pi_1(M_X(\cG))$. Since this homomorphism factors through $\pi_1(M^{rs}_X(\cG))$, which is trivial, so it follows that $M_X(\cG)$ is simply-connected.
\end{proof}

\bibliographystyle{plain}
\bibliography{fungp}
 \end{document}